\documentclass{amsart}
\usepackage{amssymb}
\usepackage{amsmath}
\usepackage{mathrsfs}
\usepackage{bbm}
\usepackage{stmaryrd}
\usepackage{tikz}
\usetikzlibrary{cd}
\usetikzlibrary{decorations.markings}

\tikzset{
  dot/.style={circle,fill,draw,inner sep=0mm,minimum size=1mm},        
  hdot/.style={circle,fill=white,draw,inner sep=0mm,minimum size=1mm}, 
}


\renewcommand{\hat}{\widehat}

\newcommand{\Z}{\mathbb{Z}}
\newcommand{\bk}{\Bbbk}

\DeclareMathOperator{\Sym}{Sym}
\newcommand{\id}{\mathrm{id}}
\newcommand{\simto}{\overset{\sim}{\to}}
\newcommand{\la}{\langle}
\newcommand{\ra}{\rangle}

\newcommand{\Kb}{K^{\mathrm{b}}}
\newcommand{\Db}{D^{\mathrm{b}}}

\newcommand{\uv}{{\underline{v}}}
\newcommand{\uw}{{\underline{w}}}

\newcommand{\Diag}{\mathscr{D}}
\newcommand{\ustar}{\mathbin{\underline{\star}}}

\newcommand{\Tilt}{\mathrm{Tilt}}
\newcommand{\Tmon}{\widetilde{\mathcal{T}}}
\newcommand{\hatstar}{\mathbin{\widehat{\star}}}


\newcommand{\FM}{\mathsf{FM}}

\newcommand{\cF}{\mathcal{F}}
\newcommand{\cG}{\mathcal{G}}

\DeclareMathOperator{\Hom}{Hom}
\DeclareMathOperator{\End}{End}
\DeclareMathOperator{\Ext}{Ext}
\DeclareMathOperator{\uHom}{\underline{Hom}}
\DeclareMathOperator{\uEnd}{\underline{End}}
\DeclareMathOperator{\gHom}{\mathbb{H}\mathsf{om}}
\DeclareMathOperator{\gEnd}{\mathbb{E}\mathsf{nd}}

\newcommand{\ngmod}{\mathrm{mod}}
\newcommand{\gmod}{\mathrm{gmod}}
\newcommand{\lh}{\text{-}}

\renewcommand{\ll}{\llbracket}
\newcommand{\rr}{\rrbracket}
\newcommand{\hoch}{{\mathrm{Ext}}}
\newcommand{\HcF}{\cF^\hoch}

\newcommand{\GL}{\mathrm{GL}}
\newcommand{\bim}{{\mathrm{bim}}}

\renewcommand{\k}{\Bbbk}
\renewcommand{\a}{\alpha}
\renewcommand{\b}{\beta}
\renewcommand{\d}{\delta}
\newcommand{\sqmatrix}[1]{\left[\begin{matrix} #1\end{matrix}\right]}
\newcommand{\smMatrix}[1]{\left[\begin{smallmatrix}#1\end{smallmatrix}\right]}

\newcommand{\one}{\mathbbm{1}}
\newcommand{\ip}[1]{\langle #1\rangle}
\newcommand{\CS}{\mathscr{C}}
\newcommand{\DS}{\mathscr{D}}
\newcommand{\Seq}{\operatorname{Seq}}
\newcommand{\Seqb}{\Seq^{\mathrm{b}}}
\newcommand{\Ch}{\operatorname{Ch}}
\newcommand{\Chb}{\Ch^{\mathrm{b}}}

\newcommand{\BS}{\mathrm{BS}}
\newcommand{\HBS}{\mathrm{BS}^\hoch}
\newcommand{\HDiag}{\Diag^\hoch}
\newcommand{\HBE}{{\mathsf{BE}^\hoch}}
\newcommand{\HBEdg}{\mathsf{BE}^{\hoch,\mathrm{dg}}}
\newcommand{\HFM}{{\mathsf{FM}^\hoch}}
\newcommand{\HFMdg}{\mathsf{FM}^{\hoch,\mathrm{dg}}}

\newcommand{\HLambdav}{\Lambda_\hoch^\vee}
\newcommand{\HR}{R_\hoch}

\newcommand{\hphi}{{\hat{\phi}}}
\newcommand{\hheta}{{\hat{\eta}^\hoch}}
\newcommand{\hheps}{{\hat{\epsilon}^\hoch}}

\newcommand{\sym}{\mathrm{sd}}
\newcommand{\HPhi}{\Phi_\sym^\hoch}


\newsavebox\linemor
\savebox\linemor{%
\begin{tikzpicture}[scale=0.5,thick,baseline=-2pt]
 \draw (0,-0.5) to (0,0.5);
\end{tikzpicture}%
}
\newsavebox\downdot
\savebox\downdot{%
\begin{tikzpicture}[scale=0.3,thick,baseline=-5pt]
 \draw (0,-0.5) to (0,0.5); \node[dot] at (0,-0.5) {};
\end{tikzpicture}%
}
\newsavebox\updot
\savebox\updot{%
\begin{tikzpicture}[scale=0.3,thick,baseline]
 \draw (0,-0.5) to (0,0.5); \node[dot] at (0,0.5) {};
\end{tikzpicture}%
}
\newsavebox\updotdowndot
\savebox\updotdowndot{%
\begin{tikzpicture}[scale=0.3,thick,baseline=-2pt]
 \draw (0,-1) to (0,-0.4); \node[dot] at (0,-0.4) {};
 \draw (0,0.4) to (0,1); \node[dot] at (0,0.4) {};
\end{tikzpicture}%
}
\newsavebox\barbell
\savebox\barbell{%
\begin{tikzpicture}[scale=0.3,thick,baseline]
 \draw (0,-0.5) to (0,0.5); \node[dot] at (0,-0.5) {}; \node[dot] at (0,0.5) {};
\end{tikzpicture}%
}

\newsavebox\invymor
\savebox\invymor{%
\begin{tikzpicture}[yscale=0.2,xscale=0.1,baseline,thick] \draw (-1,-1) -- (0,0) -- (1,-1); \draw (0,0) -- (0,1); \end{tikzpicture}%
}
\newsavebox\ymor
\savebox\ymor{%
\begin{tikzpicture}[yscale=-0.2,xscale=0.1,baseline,thick] \draw (-1,-1) -- (0,0) -- (1,-1); \draw (0,0) -- (0,1); \end{tikzpicture}%
}

\newsavebox\hdot
\savebox\hdot{%
\begin{tikzpicture}[scale=0.5,thick,baseline=-2pt]
 \draw (0,-0.5) to (0,0.5); \node[hdot] at (0,0) {};
\end{tikzpicture}%
}
\newsavebox\hdowndot
\savebox\hdowndot{%
\begin{tikzpicture}[scale=0.3,thick,baseline=-5pt]
 \draw (0,-0.5) to (0,0.5); \node[hdot] at (0,-0.5) {};
\end{tikzpicture}%
}
\newsavebox\hupdot
\savebox\hupdot{%
\begin{tikzpicture}[scale=0.3,thick,baseline]
 \draw (0,-0.5) to (0,0.5); \node[hdot] at (0,0.5) {};
\end{tikzpicture}%
}
\newsavebox\hupdotdowndot
\savebox\hupdotdowndot{%
\begin{tikzpicture}[scale=0.3,thick,baseline=-2pt]
 \draw (0,-1) to (0,-0.4); \node[hdot] at (0,-0.4) {};
 \draw (0,0.4) to (0,1); \node[dot] at (0,0.4) {};
\end{tikzpicture}%
}
\newsavebox\updothdowndot
\savebox\updothdowndot{%
\begin{tikzpicture}[scale=0.3,thick,baseline=-2pt]
 \draw (0,-1) to (0,-0.4); \node[dot] at (0,-0.4) {};
 \draw (0,0.4) to (0,1); \node[hdot] at (0,0.4) {};
\end{tikzpicture}%
}
\newsavebox\hbarbell
\savebox\hbarbell{%
\begin{tikzpicture}[scale=0.3,thick,baseline]
 \draw (0,-0.5) to (0,0.5); \node[dot] at (0,-0.5) {}; \node[hdot] at (0,0) {}; \node[dot] at (0,0.5) {};
\end{tikzpicture}%
}
\newsavebox\hobarbell
\savebox\hobarbell{%
\begin{tikzpicture}[scale=0.3,thick,baseline=-3pt]
 \draw (0,-0.5) to (0,0.5); \node[hdot] at (0,-0.5) {}; \node[dot] at (0,0.5) {};
\end{tikzpicture}%
}
\newsavebox\ohbarbell
\savebox\ohbarbell{%
\begin{tikzpicture}[scale=0.3,thick,baseline=-3pt]
 \draw (0,-0.5) to (0,0.5); \node[dot] at (0,-0.5) {}; \node[hdot] at (0,0.5) {};
\end{tikzpicture}%
}
\newsavebox\ohdowndot
\savebox\ohdowndot{%
\begin{tikzpicture}[scale=0.5,thick,baseline=-3pt]
 \draw (0,-0.5) to (0,0.5); \node[dot] at (0,-0.5) {}; \node[hdot] at (0,0) {};
\end{tikzpicture}%
}
\newsavebox\ohupdot
\savebox\ohupdot{%
\begin{tikzpicture}[scale=-0.5,thick,baseline=-3pt]
 \draw (0,-0.5) to (0,0.5); \node[dot] at (0,-0.5) {}; \node[hdot] at (0,0) {};
\end{tikzpicture}%
}


\numberwithin{equation}{section}
\numberwithin{figure}{section}
\newtheorem{thm}{Theorem}[section]
\newtheorem{lem}[thm]{Lemma}

\newtheorem{conj}[thm]{Conjecture}
\theoremstyle{definition}
\newtheorem{defn}[thm]{Definition}

\theoremstyle{remark}
\newtheorem{rmk}[thm]{Remark}
\newtheorem{ex}[thm]{Example}


\title{Ext-enhanced monoidal Koszul duality for $\GL_2$}

\author{Matthew Hogancamp}
\address{Department of Mathematics, University of Southern California, Los Angeles, CA 90089, U.S.A.}
\email{hogancam@usc.edu}

\author{Shotaro Makisumi}
\address{Department of Mathematics, Columbia University, New York, NY 10027, U.S.A.}
\email{makisumi@math.columbia.edu}

\date{September 12, 2019}

\begin{document}

\begin{abstract}
The Hecke category participates in an equivalence called monoidal Koszul duality, which exchanges it with the category of (Langlands-dual) ``free-monodromic tilting sheaves.'' Motivated by a recent conjecture of Gorsky and the first-named author on HOMFLYPT link homology, we propose to enhance this duality with an additional grading. We provide evidence for this enhancement in the case of $\GL_2$, working in the language of the second-named author's joint work with Achar, Riche, and Williamson.
\end{abstract}
\keywords{diagrammatic Hecke category, Koszul duality, HOMFLYPT link homology}
\subjclass[2010]{20F55, 20C08, 20G05}

\maketitle

\section{Introduction}

In the recent preprint \cite{gh}, E.~Gorsky and the first-named author introduced ``$y$-ified $\GL_n$ Soergel bimodules,'' a certain deformation of complexes of $\GL_n$ Soergel bimodules, and used them to define a deformation of triply-graded link homology that they call ``$y$-ified homology.'' They conjectured that $y$-ified homology restores a missing $q$-$t$ symmetry in the triply-graded HOMFLYPT link homology of Khovanov--Rozansky \cite{kr,kho}, and that this symmetry comes from a monoidal triangulated autoequivalence of the category of $y$-ified $\GL_n$ Soergel bimodules.

Soergel bimodules are an algebraic incarnation of the Hecke category, a monoidal category which plays a central role in geometric representation theory. The Hecke category participates in a monoidal triangulated equivalence known as monoidal Koszul duality, which in characteristic $0$ is due to Bezrukavnikov--Yun \cite{by} and exchanges the Hecke category associated to a reductive group $G$ with ``free-monodromic tilting sheaves'' associated to the Langlands dual group $G^\vee$.

Monoidal Koszul duality involves bigraded categories. On the other hand, HOMFLYPT homology has a third ``Hochschild'' grading (in Khovanov's construction) coming from Ext groups between Soergel bimodules computed in the abelian category of graded bimodules. The aim of this paper is to suggest that monoidal Koszul duality should similarly admit an Ext-enhancement to an equivalence of triply-graded categories.

In recent work \cite{amrwa,amrwb}, P.~N.~Achar, S.~Riche, G.~Williamson, and the second-named author proposed a new construction of free-monodromic tilting sheaves that also makes sense in positive characteristic, then used this to establish a positive characteristic monoidal Koszul duality. In this paper, we work in the language of \cite{amrwa,amrwb} to provide some evidence for an Ext-enhanced monoidal Koszul duality for $\GL_2$. In particular, we introduce an Ext-enhancement of the $\GL_2$ diagrammatic Hecke category of Elias--Khovanov \cite{ek}, then start with this category to define similarly Ext-enhanced versions of the categories of \cite{amrwa}. Assuming that the exchange law of a monoidal category continues to hold for Ext-enhanced free-monodromic tilting sheaves, our main result (Theorem~\ref{thm:hmkd}) constructs an Ext-enhanced monoidal Koszul duality functor.

\subsection{Acknowledgements}
The first-named author was supported by NSF grant DMS-1702274. Initial computations for this work were done while the second-named author was in residence at the Mathematical Sciences Research Institute during Spring 2018, supported by NSF grant DMS-1440140.

\section{Preliminaries} \label{s:prelim}

\subsection{Graded categories}
\label{ss:gradedcats}
In this paper we consider $\Z^2$- and $\Z^3$-graded monoidal categories.  Let $\k$ be a commutative ring, and let $\CS$ be a $\k$-linear category.  Let $\Gamma$ be an abelian group which acts strictly on $\CS$.  That is to say, for each $\gamma\in \Gamma$ there is an autoequivalence $\Sigma_\gamma:\CS\rightarrow \CS$, and these satisfy the relations
\[
\Sigma_{\gamma}\circ \Sigma_{\gamma'} = \Sigma_{\gamma+\gamma'},\qquad\qquad \Sigma_0 = \id_{\CS}.
\]

We let $\Hom^{\Gamma}(X,Y) :=\bigoplus_{\gamma\in \Gamma}\Hom_{\CS}(\Sigma_\gamma(X),Y)$ denote the $\Gamma$-graded $\k$-module of homs.  We let $\operatorname{en}_\Gamma(\CS)$ denote the category with the same objects as $\CS$, but with hom spaces $\Hom_{\CS}^\Gamma$.

This defines a 2-functor from categories with a strict $\Gamma$-action to categories enriched in $\Gamma$-graded $\k$-modules.

\begin{rmk}
Sometimes it is conventional to let $(\gamma)$ to denote the \emph{downward} shift by $\gamma$, i.e.~$X(\gamma):=\Sigma_{-\gamma}(X)$.  There is a canonical isomorphism
\[
\Hom_\CS^\Gamma(X,Y) = \bigoplus_{\gamma\in \Gamma}(X,Y(\gamma)).
\]
\end{rmk}

There is a 2-functor the other direction.  Suppose $\DS$ is a category enriched in $\Gamma$-graded $\k$-modules.  Define $\operatorname{un}(\DS)$ to be the category whose objects are pairs $(X,\gamma)$ consisting of an object $X\in \DS$ and an element $\gamma\in \Gamma$.  The hom spaces in $\operatorname{un}(\DS)$ are
\[
\Hom_{\operatorname{un}(\DS)}((X,\gamma),(Y,\gamma')) :=\Hom_{\DS}^{\gamma-\gamma'}(X,Y).
\] 

\begin{rmk}
If $\DS= \operatorname{en}(\CS)$ for some category $\CS$ with a strict $\Gamma$ action, then the objects $(X,\gamma)$ and $(\Sigma_\gamma(X),0)$ are isomorphic in $\operatorname{un}(\DS)$, via the isomorphism
\begin{multline*}
\id_{\Sigma_\gamma(X)}\in \Hom_{\CS}(\Sigma_\gamma(X),\Sigma_\gamma(X)) \\=  \Hom_{\DS}^\gamma(X,\Sigma_\gamma(X)) = \Hom_{\operatorname{un}(\DS)}((X,\gamma),(\Sigma_\gamma(X),0)).
\end{multline*}
Thus the functor $\CS\rightarrow  \operatorname{un}(\operatorname{en}(\CS))$ sending $X\mapsto (X,0)$ is an equivalence of categories.
\end{rmk}

\begin{ex}
If $A$ is a $\Gamma$-graded $\k$-algebra, then we let $\CS:=A\lh\gmod$ denote the category of $\Gamma$-graded $A$-modules with degree zero morphisms.  We let $\Sigma_\gamma$ denote the grading shift functor; it acts on objects by $\Sigma_\gamma(X)_{\gamma'} :=X_{\gamma'-\gamma}$.
\end{ex}

\subsection{Graded and super-monoidal categories}
\label{ss:gradedmonoidalcats}
Suppose $\CS$ is a $\k$-linear monoidal category with a strict $\Gamma$-action.  We typically want the $\Gamma$-action and the monoidal structure to be compatible. This compatibility takes the form of a pair of natural isomorphisms
\[
\a_{\gamma,X}:\Sigma_\gamma(\one)\otimes X \buildrel \cong \over\rightarrow \Sigma_{\gamma}(X),\qquad\qquad \b_{\gamma,X}:\Sigma_{\gamma}(\one)\otimes X \buildrel\cong\over\rightarrow X\otimes \Sigma_{\gamma}(\one),
\]
for all $X\in \CS$, subject to certain coherence conditions (see \cite{hog} for more details).

Most importantly, we require that $\Sigma_\gamma(\one)$, equipped with the \emph{braiding morphisms} $\b_{\gamma,X}$, has the structure of an object in the Drinfeld center of $\CS$.  Furthermore, if $X=\Sigma_\gamma(\one)$, then $\b_{\gamma,X}$ equals $(-1)^{\ip{\gamma,\gamma}} \id_{\Sigma_\gamma(X)\otimes \Sigma_\gamma(X)}$. The sign here is determined by a symmetric bilinear pairing $\ip{\ ,\ }:\Gamma\times \Gamma\rightarrow \Z/2$, called the \emph{parity form}.

\begin{rmk}
Typically the parity form is determined by a group homomorphism $p:\Gamma\rightarrow \Z/2$ via $\ip{\gamma,\gamma'} = p(\gamma)p(\gamma')$. More generally, the parity form may also be determined by a group homomorphism $p:\Gamma\rightarrow (\Z/2)^r$ via
\[
\ip{\gamma,\gamma'} :=p(\gamma)\cdot p(\gamma') = p_1(\gamma)p_1(\gamma')+\cdots+p_r(\gamma)p_r(\gamma'),
\]
where $p(\gamma)=(p_1(\gamma),\ldots,p_r(\gamma))$.  
\end{rmk}

We will say that $\CS$ is equipped with a \emph{strict monoidal $\Gamma$-action} or that $\CS$ has the structure of a \emph{$\Gamma$-monoidal category} if it is equipped with a strict $\Gamma$-action and natural transformations $\a$, $\b$ as above.

\begin{ex}
Let $\CS=\Ch(\k\lh\ngmod)$ be the category of $\Z$-graded complexes of $\k$-modules with differentials of degree $1$.  Then $\CS$ is $\Z$-monoidal with tensor product $\otimes_\k$, extended to complexes via the usual sign rule, and the parity form is determined by the nontrivial homomorphism $\Z\twoheadrightarrow \Z/2$.
\end{ex}

Now, if $\CS$ has the structure of a $\Gamma$-monoidal category, then we can consider the enriched category $\operatorname{en}(\CS)$ defined in the previous section.  This category inherits a tensor product from $\CS$, but the tensor product of morphisms satisfies the graded exchange law
\[
(f\otimes g)\circ (f'\otimes g') = (-1)^{\ip{\deg g,\deg f'}} (f\circ f')\otimes (g\circ g').
\]

The enriched category $\operatorname{en}(\CS)$ is not, strictly speaking, a monoidal category. Rather, it is a \emph{super-monoidal category}, because of the above exchange law.

\begin{rmk}
Traditionally, the prefex ``super'' indicates $\Z/2$-graded categories.  We prefer a less restricted use, which instead refers to arbitrary $\Gamma$-graded categories equipped with a parity form $\Gamma\times \Gamma \rightarrow \Z/2$.
\end{rmk}

\begin{rmk}
The graded ring of endomorphisms $\End^\Gamma_{\CS}(\one)$ in a $\Gamma$-graded super-mon-oidal category $\CS$ is super-commutative with respect to the given parity form $\ip{\ ,\ }$.
\end{rmk}

\begin{rmk}
There are two natural ways to set up a graphical calculus for graded monoidal categories.  In the first, one considers only degree zero morphisms.  In this setup, the calculus works in exactly the usual way.  The central objects $\Sigma_\gamma(\one)$ can be used to encode morphisms of nonzero degree, and all the various signs are captured entirely by the signs involved in braiding $\Sigma_\gamma(\one)$ past itself.

In the second setup, one allows morphisms of arbitrary degree.  In this case, exchanging the heights of two distant morphisms introduces a sign given by the parity form. This setup will be referred to as the super-calculus for graded monoidal categories.  The super-calculus is more compact, but the signs are conventional and occasionally mysterious.  Both calculi are preferable in various instances.
\end{rmk}

\section{Ext-enhanced Bott--Samelson bimodules}
\label{sec:hsbim}

In the rest of this paper, we fix a field $\bk$ and let
\[
 V = \bk y_1 \oplus \bk y_2, \qquad V^* = \bk x_1 \oplus \bk x_2, \qquad \alpha_s = x_1 - x_2, \qquad \alpha_s^\vee = y_1 - y_2,
\]
where $(y_1, y_2)$ and $(x_1, x_2)$ are dual bases of the $\bk$-vector spaces $V$ and $V^*$. This is the $\GL_2$ realization (in the sense of Elias--Williamson \cite[\S3.1]{ew}) of the type $A_1$ Coxeter system $S_2 = \{\id, s\}$, acting on $V$ via $s(y_1) = y_2$ and $s(y_2) = y_1$.

Let us recall the associated monoidal category of Bott--Samelson bimodules. Let $R = \Sym(V^*)$, viewed as a $\Z$-graded $\bk$-algebra with $\deg V^* = 2$. Consider the $\Z$-graded $R$-bimodule $B^\bim_s = R \otimes_{R^s} R(1)$, where $R^s \subset R$ denotes the $s$-invariants, and $(1)$ shifts the bimodule degree down by $1$. Given an expression $\uw = (s, \ldots, s)$, define the Bott--Samelson bimodule
\[
 B^\bim_\uw = B^\bim_s \otimes_R \cdots \otimes_R B^\bim_s.
\]
For the empty word, $B^\bim_\varnothing = R$. Let $R\lh\gmod\lh R$ be the category of $\Z$-graded $R$-bimodules and bimodule homomorphisms of degree $0$. Then $\BS_2$ is defined to be the full subcategory of $R\lh\gmod\lh R$ whose objects are $B^\bim_\uw(j)$ for expressions $\uw$ and $j \in \Z$.

Now, consider the bounded derived category $\Db(R\lh\gmod\lh R)$. We denote the cohomological shift by $\lceil1\rfloor$ (since $[1]$ will be reserved for another shift later on).
\begin{defn}
 The category of \emph{Ext-enhanced $\GL_2$ Bott--Samelson bimodules} $\HBS_2$ is the smallest full subcategory of $\Db(R\lh\gmod\lh R)$ containing $\BS_2$ (viewed as complexes supported in cohomological degree $0$) and closed under cohomological shift.
\end{defn}
In other words, objects of $\HBS_2$ are of the form $B(m)\lceil n \rfloor$, where $B$ is a Bott--Samelson bimodule and $m,n \in \Z$. For two such objects,
\[
 \Hom_{\HBS_2}(B(m)\lceil n \rfloor, B'(m')\lceil n' \rfloor) = \Ext^{n'-n}_{R\lh\gmod\lh R}(B(m),B'(m')).
\]
Instead of the cohomological shift, we will work primarily with the combined shift
\[
 \ll1\rr := (-2)\lceil 1 \rfloor.
\]
The category $\HBS_2$ is thus equipped with two grading shifts $(1), \ll1\rr$. Given two objects $B, B' \in \HBS_2$, define the bigraded $\bk$-module $\gHom(B,B')$ by
\[
 \gHom^{j,k}(B, B') = \Hom_{\HBS_2}(B, B'(j)\ll k \rr).
\]

Let us discuss the monoidal structure on these categories. The tensor product $\otimes_R$ makes $\BS_2$ into a monoidal category with a grading shift $(1)$. The monoidal structure on $\Db(R\lh\gmod\lh R)$ is defined by the derived tensor product over $R$.  However, since each bimodule in $\BS_2$ is free as a left and right $R$-module, the derived tensor product of such bimodules coincides with the usual tensor product, and the monoidal structure is defined by the ordinary tensor product.

To describe morphisms in $\HBS_2$, one chooses a resolution of each Bott--Samelson bimodule by graded free $R$-bimodules and considers morphisms in the homotopy category $\Kb(R\lh\gmod\lh R)$ between these resolutions.  The monoidal structure on morphisms in $\HBS_2$ then corresponds to the usual tensor product of morphisms between these resolutions.  The discussion of \S\ref{s:prelim} shows that $\HBS_2$ can be viewed either as a genuine monoidal category with two grading shifts $(1)$ and $\ll1\rr$, or as a $\Z^2$-graded super-monoidal category with parity function $\Z^2 \rightarrow \Z/2$ sending $(j,k)\mapsto k \mod 2$.

\subsection{Diagrammatic presentation}
\label{ss:hsbim-diag}

A diagrammatic monoidal presentation for $\BS_2$ (and more generally for the $\GL$-realization of any $S_n$) was given by Elias--Khovanov \cite{ek}: they defined a $\bk$-linear strict monoidal category $\Diag_2$ by generators and relations, together with a monoidal equivalence
\begin{equation} \label{eq:EK-equiv}
 \cF: \Diag_2 \simto \BS_2.
\end{equation}

Subsequent work of Elias \cite{elias} and Elias--Williamson \cite{ew} defined the \emph{diagrammatic Hecke category} for more general Coxeter systems and realizations. This presentation of the Hecke category was crucial in \cite{amrwb}, which constructs the monoidal Koszul duality functor by generators and relations.

We follow the same approach for the Ext-enhancement. For this, we need a presentation for $\HBS_2$. The category we construct will be a $\Z^2$-graded super-monoidal category with respect to the parity function $p:\Z^2 \rightarrow \Z/2$ sending $(j,k)\mapsto k$ mod 2.

\begin{defn}
The \emph{Ext-enhanced $\GL_2$ diagrammatic Hecke category} $\HDiag_2$ is the $\bk$-linear $\Z^2$-graded strict super-monoidal category defined by the diagrammatic presentation below.
\end{defn}

The objects of $\HDiag_2$ are the same as those of $\Diag_2$; they are indexed by expressions, and the object corresponding to $\uw$ is denoted by $B_\uw$:
\[
 B_\varnothing, \quad B_s, \quad B_{(s,s)}, \quad B_{(s,s,s)}, \quad \ldots.
\]

As in $\Diag_2$, a morphism $B_\uw \to B_\uv$ in $\HDiag_2$ is a $\bk$-linear combination of diagrams in a planar strip, where each diagram has bottom boundary $\uw$, top boundary $\uv$, and is made up of local pieces given by a list of generating morphisms.  

Each generating morphism in $\Diag_2$ of degree $m$ is also a generating morphism in $\HDiag_2$ of bidegree $(m,0)$:
\begin{subequations}
\begin{equation} \label{eqn:ogens}
{\arraycolsep=9pt
\begin{array}{ccccc}
\text{generator:}
&
\begin{array}{c}\begin{tikzpicture}[scale=0.5,thick,baseline]
 \draw (0,-1) to (0,0); \node[dot] at (0,0) {};
\end{tikzpicture}\end{array}
&
\begin{array}{c}\begin{tikzpicture}[scale=-0.5,thick,baseline]
 \draw (0,-1) to (0,0); \node[dot] at (0,0) {};
\end{tikzpicture}\end{array}
&
\begin{array}{c}\begin{tikzpicture}[scale=0.5,thick,baseline]
 \draw (-1,-1) -- (0,0); \draw (1,-1) -- (0,0) -- (0,1);
\end{tikzpicture}\end{array}
&
\begin{array}{c}\begin{tikzpicture}[scale=-0.5,thick,baseline]
 \draw (-1,-1) -- (0,0); \draw (1,-1) -- (0,0) -- (0,1);
\end{tikzpicture}\end{array} \\[8pt]
\text{bidegree:} & (1,0) & (1,0) & (-1,0) & (-1,0)
\end{array}
}
\end{equation}
In the graphical calculus of $\Diag_2$ one also has generating morphisms consisting of homogeneous elements of $R$ floating in an empty region.  In $\HDiag_2$ we allow elements of the algebra of self $\Ext$'s of $R$ to float around.  We now describe this algebra explicitly.

Recall that $R = \k[x_1,x_2]$.  Then the bigraded algebra of self $\Ext$'s of $R$ is canonically isomorphic to $\HR := R\otimes \HLambdav$, where $\HLambdav=\Lambda(V\ll 1\rr)$. We write $\xi_1, \xi_2, \xi_s$ for the elements of $V\ll 1\rr$ corresponding to $y_1, y_2, \alpha_s^\vee \in V$, so that $\HLambdav$ can be identified with the exterior algebra in the two generators $\xi_1,\xi_2$ of degrees $(0,1)$.  In other words
\[
\HR = R\otimes_\k \HLambdav \cong \k[x_1,x_2]\otimes \Lambda[\xi_1,\xi_2],\qquad \deg x_i =(2,0),\qquad\deg \xi_i =(0,1).
\]
In addition to the generators \eqref{eqn:ogens}, $\HDiag_2$ has the following generators:
\begin{equation} \label{eqn:hgens}
{\arraycolsep=15pt
\begin{array}{ccc}
\text{generator:}
&
\begin{array}{c}\begin{tikzpicture}[scale=0.4,thick,baseline]
 \draw (0,-1) to (0,1); \node[hdot] at (0,0) {};
\end{tikzpicture}\end{array}
&
\begin{array}{c}\begin{tikzpicture}[scale=0.5,thick,baseline]
 \draw (-0.5,-0.5) rectangle (0.5,0.5); \node at (0,0) {$f$};
\end{tikzpicture}\end{array} \\[8pt]
\text{bidegree:} & (-2,1) & \deg f
\end{array}
}
\end{equation}
Here, $f$ is a homogeneous element in $\HR$, and $\deg f$ denotes its bidegree.  We also define the shorthands
\begin{equation} \label{eqn:hupdot-hdowndot-def}
\begin{array}{c}\begin{tikzpicture}[xscale=0.3,yscale=0.3,thick,baseline]
 \draw (0,-1) -- (0,1); \node[hdot] at (0,1) {};
\end{tikzpicture}\end{array}
:=
\begin{array}{c}\begin{tikzpicture}[xscale=0.3,yscale=0.3,thick,baseline]
 \draw (0,-1) -- (0,1); \node[hdot] at (0,0) {}; \node[dot] at (0,1) {};
\end{tikzpicture}\end{array}\ \ ,
\qquad \qquad
\begin{array}{c}\begin{tikzpicture}[xscale=0.3,yscale=-0.3,thick,baseline]
 \draw (0,-1) -- (0,1); \node[hdot] at (0,1) {};
\end{tikzpicture}\end{array}
:=
\begin{array}{c}\begin{tikzpicture}[xscale=0.3,yscale=-0.3,thick,baseline]
 \draw (0,-1) -- (0,1); \node[hdot] at (0,0) {}; \node[dot] at (0,1) {};
\end{tikzpicture}\end{array}\ \ .
\end{equation}
\end{subequations}

The morphisms of $\HDiag_2$ satisfy the defining relations of $\Diag_2$, plus the following additional relations:
\begin{subequations}
\begin{gather}
\begin{array}{c}\begin{tikzpicture}[xscale=0.5,yscale=0.6,thick,baseline]
 \draw (-1,-1) -- (0,0) -- (0,1); \draw (1,-1) -- (0,0); \node[hdot] at (-0.5,-0.5) {};
\end{tikzpicture}\end{array}
=
\begin{array}{c}\begin{tikzpicture}[xscale=0.5,yscale=0.6,thick,baseline]
 \draw (-1,-1) -- (0,0) -- (0,1); \draw (1,-1) -- (0,0); \node[hdot] at (0,0.5) {};
\end{tikzpicture}\end{array}
=
\begin{array}{c}\begin{tikzpicture}[xscale=0.5,yscale=0.6,thick,baseline]
 \draw (-1,-1) -- (0,0) -- (0,1); \draw (1,-1) -- (0,0); \node[hdot] at (0.5,-0.5) {};
\end{tikzpicture}\end{array}, \qquad
\begin{array}{c}\begin{tikzpicture}[xscale=0.5,yscale=-0.6,thick,baseline]
 \draw (-1,-1) -- (0,0) -- (0,1); \draw (1,-1) -- (0,0); \node[hdot] at (-0.5,-0.5) {};
\end{tikzpicture}\end{array}
=
\begin{array}{c}\begin{tikzpicture}[xscale=0.5,yscale=-0.6,thick,baseline]
 \draw (-1,-1) -- (0,0) -- (0,1); \draw (1,-1) -- (0,0); \node[hdot] at (0,0.5) {};
\end{tikzpicture}\end{array}
=
\begin{array}{c}\begin{tikzpicture}[xscale=0.5,yscale=-0.6,thick,baseline]
 \draw (-1,-1) -- (0,0) -- (0,1); \draw (1,-1) -- (0,0); \node[hdot] at (0.5,-0.5) {};
\end{tikzpicture}\end{array}, \label{eqn:hdot-trivalent} \\
\begin{array}{c}\begin{tikzpicture}[xscale=0.3,yscale=0.4,thick,baseline]
 \draw (0,-1) -- (0,1); \node[dot] at (0,-1) {}; \node[hdot] at (0,0) {}; \node[dot] at (0,1) {};
\end{tikzpicture}\end{array}
=
\begin{array}{c}\begin{tikzpicture}[xscale=0.3,yscale=0.25,thick,baseline]
 \draw (0,-1) -- (0,1); \node[dot] at (0,-1) {}; \node[hdot] at (0,1) {};
\end{tikzpicture}\end{array}
=
\begin{array}{c}\begin{tikzpicture}[xscale=0.3,yscale=0.25,thick,baseline]
 \draw (0,-1) -- (0,1); \node[dot] at (0,1) {}; \node[hdot] at (0,-1) {};
\end{tikzpicture}\end{array}
=
\begin{array}{c}\begin{tikzpicture}[scale=0.25,thick,baseline]
 \draw (-1,-1) rectangle (1,1); \node at (0,0) {$\xi_s$};
\end{tikzpicture}\end{array}, \label{eqn:hbarbell} \\
\begin{array}{c}\begin{tikzpicture}[xscale=0.3,yscale=0.3,thick,baseline]
 \draw (0,-1.5) -- (0,1.5); \node[hdot] at (0,-0.5) {}; \node[hdot] at (0,0.5) {};
\end{tikzpicture}\end{array}
=
0, \label{eqn:hdot-square} \\
\begin{array}{c}\begin{tikzpicture}[scale=0.4,thick,baseline]
 \draw (-0.5,-0.5) rectangle (0.5,0.5); \node at (0,0) {$f$};
\end{tikzpicture}\end{array}
+
\begin{array}{c}\begin{tikzpicture}[scale=0.4,thick,baseline]
 \draw (-0.5,-0.5) rectangle (0.5,0.5); \node at (0,0) {$f'$};
\end{tikzpicture}\end{array}
=
\begin{array}{c}\begin{tikzpicture}[scale=0.4,thick,baseline]
 \draw (-1.2,-0.5) rectangle (1.2,0.5); \node at (0,0) {$f + f'$};
\end{tikzpicture}\end{array}, \qquad
\begin{array}{c}\begin{tikzpicture}[scale=0.4,thick,baseline]
 \draw (-0.5,0.5) rectangle (0.5,1.5); \node at (0,1) {$f$};
 \draw (-0.5,-1.5) rectangle (0.5,-0.5); \node at (0,-1) {$f'$};
\end{tikzpicture}\end{array}
=
\begin{array}{c}\begin{tikzpicture}[scale=0.4,thick,baseline]
 \draw (-1.2,-0.5) rectangle (1.2,0.5); \node at (0,0) {$f \cdot f'$};
\end{tikzpicture}\end{array}
\qquad \text{ for } f, f' \in \HR, \label{eqn:exterior-boxes-mult} \\
\begin{array}{c}\begin{tikzpicture}[scale=0.4,thick,baseline]
 \draw (0,-1.3) -- (0,1.3);
 \draw (0.5,-0.5) rectangle (1.5,0.5); \node at (1,0) {$\xi$};
\end{tikzpicture}\end{array}
=
\begin{array}{c}\begin{tikzpicture}[scale=0.4,thick,baseline]
 \draw (0,-1.3) -- (0,1.3);
 \draw (-2.3,-0.5) rectangle (-0.5,0.5); \node at (-1.4,0) {$s(\xi)$};
\end{tikzpicture}\end{array}
+
\alpha_s(\xi)
\begin{array}{c}\begin{tikzpicture}[scale=0.4,thick,baseline]
 \draw (0,0.5) -- (0,1.3); \node [hdot] at (0,0.5) {};
 \draw (0,-1.3) -- (0,-0.5); \node[dot] at (0,-0.5) {};
\end{tikzpicture}\end{array}
\qquad \text{ for } \xi \in V\ll-1\rr \subset \HLambdav. \label{eqn:exterior-forcing}
\end{gather}
\end{subequations}
This concludes the definition of $\HDiag_2$.

We will sometimes use $\star$ rather than juxtaposition for the monoidal product in $\HDiag_2$. We also identify elements of $\HR = R \otimes_\bk \HLambdav$ with $\bk$-linear combinations of products of the corresponding boxes.

It is a straightforward exercise in diagrammatics to derive the following further relations from the ones above:
\begin{subequations}
\begin{gather}
 \usebox\updothdowndot = \usebox\hupdotdowndot \ , \label{eqn:updot-downdot} \\
 \xi_s \star \usebox\downdot = \alpha_s \star \usebox\hdowndot \ , \label{eqn:downdot-hdowndot} \\
 \xi_s \star \usebox\updot = \alpha_s \star \usebox\hupdot \ , \label{eqn:updot-hupdot} \\
 \xi_s \star \usebox\hdot = \usebox\hdot \star \xi_s = 0.
\end{gather}
\end{subequations}
Below, the bigraded morphism spaces in $\HDiag_2$ will be denoted by $\gHom(-,-)$, with $\gEnd(B):=\gHom(B,B)$.  The following computation of morphism spaces is again straightforward, using the known (thanks to the equivalence \eqref{eq:EK-equiv}) morphism spaces in $\Diag_2$:
\begin{subequations} \label{eqn:hdiag-hom}
\begin{gather}
 \gEnd(B_\varnothing) = \HR = R \otimes_\bk \HLambdav, \\
 \gHom(B_\varnothing, B_s) = \left( \HR \star \usebox\updot + \HR \star \usebox\hupdot \right)/(\xi_s \star \usebox\updot - \alpha_s \star \usebox\hupdot), \\
 \gHom(B_s, B_\varnothing) = \left(\HR \star \usebox\downdot + \HR \star \usebox\hdowndot \right)/(\xi_s \star \usebox\downdot - \alpha_s \star \usebox\hdowndot), \\
 \gEnd(B_s) = \left(\HR \star \usebox\linemor + \HR \star \usebox\updotdowndot + \HR \star \usebox\hdot + \HR \star \usebox\updothdowndot \right)/(\xi_s \star \usebox\hdot \ , \xi_s \star \usebox\updotdowndot - \alpha_s \star \usebox\updothdowndot \,).
\end{gather}
\end{subequations}

In the rest of this paper, we work with $\HDiag_2$ rather than $\HBS_2$.

\begin{rmk} \label{rmk:hsbim-diag}
 One can show that there exists a $\bk$-linear monoidal equivalence $\HcF: \HDiag_2 \simto \HBS_2$ extending the equivalence \eqref{eq:EK-equiv}. Full details will appear in future work in a more general setting, but most of the work for this $\GL_2$ case is already contained in \cite[\S3.5]{gh} (where $\HDiag_2$ is denoted by $\mathcal{D}_2$). In the notation of \cite[\S3.5]{gh}, $\HcF$ is defined by generators and relations by sending
\[
 \usebox\updot \mapsto b,
 \qquad
 \usebox\downdot \mapsto b^*,
 \qquad
 \begin{array}{c}\begin{tikzpicture}[scale=0.2,baseline=-3pt] \draw (-1,-1) rectangle (1,1); \node at (0,0) {$x_i$}; \end{tikzpicture}\end{array} \mapsto x_i,
 \qquad
 \usebox\hdot \mapsto -\iota_{\varphi_2},
 \qquad
 \begin{array}{c}\begin{tikzpicture}[scale=0.2,baseline=-3pt] \draw (-1,-1) rectangle (1,1); \node at (0,0) {$\xi_i$}; \end{tikzpicture}\end{array} \mapsto \theta_i,
\]
which imply $\usebox\hupdot \mapsto \omega$, $\usebox\hdowndot \mapsto \omega^*$. Then \cite[Proposition~3.30]{gh} verifies many of the necessary relations among these images, and also computes many of the hom spaces needed to show fully faithfulness.
\end{rmk}

\section{Ext-enhanced free-monodromic tilting sheaves}
\label{sec:hfmtilt}
Starting from the diagrammatic Hecke category $\Diag_2$, \cite{amrwa} defined a category $\FM_2$ of ``free-monodromic complexes'' and a full subcategory $\Tilt_2$ of ``free-monodromic tilting sheaves.'' This construction can be repeated with $\Diag_2$ replaced by $\HDiag_2$ to yield categories $\FM_2^\hoch$ of \emph{Ext-enhanced free-monodromic complexes} and $\Tilt_2^\hoch$ of \emph{Ext-enhanced free-monodromic tilting sheaves}.

We assume familiarity with \cite{amrwa}, and only recall those details of these constructions that are relevant for the calculations that follow. See, however, the remark at the end of \S\ref{ss:diagram-seq} for differences in grading and sign convention.

\subsection{Diagram sequences} \label{ss:diagram-seq}
We need to clarify how the extra grading in $\HDiag_2$ is treated when dealing with complexes and, soon, free-monodromic complexes.

Let $(\HDiag_2)^\oplus$ denote the additive envelope of $\HDiag_2$, obtained by formally adjoining finite direct sums.  Let $\DS' := (\HDiag_2)^{\oplus,(1),\ll 1 \rr}$ denote the envelope of $\HDiag_2$ in which we adjoin not just formal direct sums, but also formal grading shifts $B(m)\ll n\rr$ with $B\in (\HDiag_2)^\oplus$ and $m,n\in \Z$.  The morphism spaces in this envelope are by definition the bigraded hom spaces
\[
\Hom^{j,k}_{\DS'}(B(m)\ll n\rr, B'(m')\ll n'\rr) :=\gHom^{j+m'-m,k+n'-n}(B,B').
\]
By construction, $\DS'$ is a $\Z^2$-graded super-monoidal category with parity form
\[
\Z^2\times \Z^2\rightarrow \Z,\qquad \qquad ((j,k),(j',k'))\mapsto kk'.
\]
We want to consider formal complexes in which the ``chain groups'' are objects in $\DS'$.  As a preliminary, we first construct the category which later will play the role of ``complexes with zero differential.''

Let $\Seq(\DS')$ be the category of $\Z$-graded sequences of objects in $\DS'$.  An object of $\Seq(\DS')$ is a $\Z$-indexed sequence $\cF = (\cF^i)_{i \in \Z}$ with $\cF^i\in \DS'$. The morphism spaces in $\Seq(\DS')$ are the $\Z^3$-graded $\k$-modules $\Hom_{\Seq(\DS')}^{\Z\times \Z\times\Z}(\cF,\cG)$ with
\[
\Hom_{\Seq(\DS')}^{i,j,k}(\cF,\cG):= \prod_{p\in \Z}\Hom_{\DS'}^{j,k}(\cF^p,\cG^{p+i}).
\]
Morphisms of degree $(i,j,k)$ are said to have \emph{cohomological degree} $i$, \emph{Soergel degree} $j$, and \emph{Hochschild degree} $k$. 

Let $\Seqb(\DS')\subset \Seq(\DS')$ denote the full subcategory consisting of finite sequences $\cF$, for which $\cF^i=0$ for all but finitely many $i\in \Z$.  The category $\Seqb(\DS')$ inherits an operation $\star$ defined on objects by
\[
(\cF\star \cG)^p :=\bigoplus_{q+q'=p} \cF^q\star \cG^{q'}.
\]
If $f\in \Hom_{\Seq(\DS')}^{i,j,k}(X,X')$ and $g\in \Hom_{\Seq(\DS')}^{i',j',k'}(Y,Y')$ are two homogeneous morphisms, then we define a morphism $f\star g\in \Hom_{\Seq(\DS')}^{i+i',j+j',k+k'}(X\star Y,X'\star Y')$ by its restrictions
\[
(f\star g)|_{X^p\star Y^q} := (-1)^{pi'} f|_{X^p} \star g|_{Y^q} \in \Hom_{\DS'}^{j+j',k+k'}(X^p\star Y^q, X^{p+i}\star Y^{q+i'}).
\]

It is an exercise to show that this gives $\Seqb(\DS')$ the structure of a $\Z^3$-graded super-monoidal category with the associated parity form
\[
\ip{\ ,\ }:\Z^3\times\Z^3\rightarrow \Z/2,\qquad\qquad \ip{(i,j,k),(i',j',k')}:= ii'+kk',
\]
In other words, the composition of morphisms satisfies the super-exchange law
\[
(f'\star g')\circ (f\star g) = (-1)^{\ip{\deg g', \deg f}}(f'\circ f)\star (g'\circ g).
\]

\begin{rmk}
It is important to note that if $f$ and $g$ are morphisms in $\Seqb(\DS')$ with $\deg f=(2i,j,k)$ and $\deg g=(i',j',2k')$, then $(f \star \id) \circ (\id \star g) =  (\id \star g) \circ (f \star \id)$ with no sign.  In this way the parities associated to the cohomological and Hochschild degrees are independent.
\end{rmk}

Let $[1]:\Seq(\DS')\rightarrow \Seq(\DS')$ denote the downward grading shift functor $\cF[1]^p= \cF^{p+1}$.  On morphisms $[1]$ acts by a conventional sign:
\[
f[1] := (-1)^{|f|} f,
\]
where $|f|\in \Z$ denotes the first component of $\deg(f)\in \Z^3$. The sign here guarantees that the functor $\cF\mapsto \one[1]\star \cF$ is naturally isomorphic to $\cF\mapsto \cF[1]$.


Altogether, $\Seq(\DS')$ is equipped with three grading shift functors $[i],(j),\ll k \rr$, defined on objects by
\[
\cF[l](m)\ll n \rr^p =\cF^{p+l}(m)\ll n\rr,
\]
and satisfying
\[
\Hom_{\Seq(\DS')}^{i,j,k}(\cF,\cG[l](m)\ll n \rr) = \Hom_{\Seq(\DS')}^{i+l,j+m,k+n}(\cF,\cG).
\]
We will also use the combined shift
\[
\la1\ra := [1](-1).
\]

\begin{rmk} The Hochschild degree $0$ part of our construction recovers the categories of \cite{amrwa}, with two differences in grading and sign convention. First, degree $(i,j)$ in \cite{amrwa} corresponds to degree $(i-j,j,0)$ in this paper. Second, \cite{amrwa} defined an operation $\ustar$ on $\Seqb((\Diag_2)^{\oplus,(1)})$ by a careful choice of signs so that $\Seqb((\Diag_2)^{\oplus,(1)})$ became $\Z^2$-graded super-monoidal with parity function $(i,j) \mapsto i$. One could just as well have used the induced product $\star$ as above and parity function $(i,j) \mapsto i+j$ (corresponding to $(i,j,0) \mapsto i$ in our convention).
\end{rmk}

\subsection{Free-monodromic complexes}
Below we will consider differential $\Z^3$-graded categories with differentials of degree $(1,0,0)$ in which the Leibniz rule takes the form
\[
d(f\circ g) = d(f)\circ g + (-1)^{|f|}f\circ d(g),
\]
where $|f|$ is the cohomological degree of $f$.  Such categories will be called \emph{dggg categories}.  For instance $\Seq(\DS')$ defined in \S\ref{ss:diagram-seq} is a dggg category with zero differential.

\begin{defn} Let $\HBEdg:=\Chb(\DS')$ denote the dggg category of formal finite complexes over $\DS'$.  Objects of this category are pairs $(\cF,\d)$ where $\cF$ is an object of $\Seqb(\DS')$ and $\d\in \End^{1,0,0}_{\Seq(\DS')}(\cF)$ satisfies $\d \circ \d =0$.  The morphism spaces in $\HBEdg$ are the complexes
\[
\uHom_{\HBE}^{\Z\times \Z\times \Z}(\cF,\cG):=\Hom_{\Seq(\DS')}^{\Z\times \Z\times \Z}(\cF,\cG)
\]
with differential
\[
f\mapsto \d_\cG\circ f -(-1)^{|f|}f\circ \d_\cF.
\]
Let $\HBE$ denote the cohomology category of $\HBEdg$; it has the same objects, but morphism spaces are the degree $(0,0,0)$ chain maps modulo homotopy.
\end{defn}

We define some $\Z^3$-graded $\bk$-algebras in preparation for our definition of $\HFMdg$. Regard $R = \Sym(V^*(-2))$ and $\HLambdav = \Lambda(V\ll 1\rr)$ now as being $\Z^3$-graded, concentrated in cohomological degree $0$. Also define
\[
 \Lambda := \Sym(V^*[1](-2)), \qquad R^\vee := \Sym(V\la-2\ra).
\]
We write $\nu_1, \nu_2, \nu_s$ for the elements of $V^*[1](-2)$ corresponding to $x_1, x_2, \alpha_s \in V^*$. Thus
\[
R=\k[x_1,x_2],\qquad \HLambdav = \Lambda[\xi_1, \xi_2], \qquad \Lambda = \Lambda[\nu_1, \nu_2], \qquad R^\vee = \bk[y_1, y_2],
\]
with degrees
\[
\deg x_i = (0,2,0), \quad \deg \xi_i = (0,0,1), \quad \deg \nu_i = (-1,2,0), \quad \deg y_i = (2,-2,0).
\]
Consider the differential $\Z^3$-graded algebra
\[
K:=\Lambda \otimes_\k R^\vee\otimes_\k R = \k[x_1,x_2,y_1,y_2]\otimes_\k\Lambda[\nu_1,\nu_2]
\]
with differential $\kappa$ determined by $\kappa(x_i)=0=\kappa(y_i)$ and $\kappa(\nu_i)=x_i$ together with the Leibniz rule with respect to the cohomological degree. Let $K\otimes_R \Seq(\DS')$ denote the category with the same objects as $\Seq(\DS')$, but morphism spaces given by
\[
\Hom_{K\otimes_R \Seq(\DS')}^{\Z\times \Z\times \Z}(\cF,\cG):= K\otimes_R \Hom^{\Z\times\Z\times \Z}_{\Seq(\DS')}(\cF,\cG)
\]
with composition
\[
(a\otimes f)\circ (b\otimes g) = (-1)^{|f||b|}ab\otimes (f\circ g).
\]
Hom spaces in $K\otimes_R \Seq(\DS')$ inherit a differential from $K$, which we will continue to denote by $\kappa$.  Thus, $K\otimes_R \Seq(\DS')$ is a dggg category.

\begin{defn}
For each $\cF\in \Seq(\DS')$, let $\Theta_{\cF}\in \End_{K\otimes_R \Seq(\DS')}^{2,0,0}(\cF)$ denote the closed endomorphism
\[
\Theta_\cF := \sum_i y_i\otimes(\id_\cF\star x_i).
\]

Let $\HFMdg$ denote the dggg category whose objects are pairs $(\cF,\d)$ where $\cF\in \Seq(\DS')$ and $\d\in \End_{K\otimes_R \Seq(\DS')}^{1,0,0}(\cF)$ is an element such that
\[
\kappa(\d)+\d \circ \d = \Theta_\cF.
\]
The hom spaces in $\HFMdg$ are by definition the complexes
\[
\uHom_{\HFM}^{\Z\times \Z\times \Z}((\cF,\d),(\cG,\d')):=\Hom_{K \otimes_R \Seq(\DS')}^{\Z\times \Z\times \Z}(\cF,\cG)
\]
with differential
\[
f\mapsto \kappa(f) + \d'\circ f - (-1)^{|f|}f\circ \d.
\]
Let $\HFM$ denote the cohomology category of $\HFMdg$.  Objects of $\HFMdg$ or $\HFM$ are called \emph{($\Ext$-enhanced) free-monodromic complexes}.
\end{defn}

We adopt the usual terminology of dg categories.  A homogeneous morphism $f\in \uHom_{\HFM}^{i,j,k}((\cF,\d),(\cG,\d'))$ is \emph{closed} if $\kappa(f) + \d'\circ f - (-1)^i f\circ \d =0$ and \emph{exact} if $f = \kappa(h) + \d'\circ h + (-1)^i h\circ \d$ for some $h\in \uHom^{i-1,j,k}_{\HFM}((\cF,\d),(\cG,\d'))$.  Closed morphisms of degree zero are called \emph{chain maps}, and exact morphisms of degree zero are called nullhomotopic chain maps.

Morphisms in $\HFM$ are degree zero chain maps modulo homotopy.
\begin{rmk}
The endomorphisms $\Theta_\cF$ define a closed degree $(2,0,0)$ element $\Theta$ of the center of $K\otimes_R \Seq(\DS').$  The centrality of $\Theta$ is used in the proof that the definition above actually defines a dggg category.
\end{rmk}

Let us recall the free-monodromic complexes $\Tmon_\varnothing$ and $\Tmon_s$ defined in \cite[\S5.3.1]{amrwa} and \cite[\S5.3.2]{amrwa}, respectively. Define the following elements of $K$:
\begin{equation} \label{eqn:theta-w-defn}
 \theta = \theta_\id := \sum_{i = 1}^2 \nu_iy_i, \qquad \theta_s := \sum_{i = 1}^2 s(\nu_i)y_i.
\end{equation}
The following equations are easily checked:
\begin{equation} \label{eqn:theta-minus-theta-s-quadratic-odd-rels}
 \theta - \theta_s = \nu_s \alpha_s^\vee, \qquad
 (y_1+y_2)\nu_1\nu_2 = \nu_s \theta_s = \nu_s \theta = - \theta_s\nu_s = - \theta\nu_s.
\end{equation}

Both $\theta$ and $\theta_s$ are degree $(1,0,0)$ elements of the (graded) center: for $w \in \{\id, s\}$,
\begin{equation} \label{eqn:theta-w-graded-central}
 \theta_w \circ f = (-1)^{|f|}f \circ \theta_w.
\end{equation}

The underlying sequence of $\Tmon_\varnothing$ consists of $B_\varnothing$ in position $0$, and $\delta_{\Tmon_\varnothing} = \theta$.

The underlying sequence of $\Tmon_s$ is $(\ldots, 0, B_\varnothing(-1), B_s, B_\varnothing (1), 0, \ldots)$, where the non-zero terms are in positions $-1$ through $1$.  This sequence can also be denoted by $B_\varnothing(-1)[1]\oplus B_s[0]\oplus B_\varnothing(1)[-1]$, and
{ \small
\[
\Tmon_s = \left(B_\varnothing(-1)[1]\oplus B_s[0]\oplus B_\varnothing(1)[-1]\ \ , \ \  \  \d\ := \ \sqmatrix{
\theta\otimes \id & 0 & 0 \\
1\otimes \usebox\downdot & \ \ \ \theta_s\otimes \id \ \ \ & \a_s^\vee\otimes \usebox\downdot \\
-\nu_s\otimes \id & 1\otimes \usebox\updot & \theta_s\otimes \id} \right).
\]
}

These objects can be depicted by the following pictures:
\[
\Tmon_\varnothing =
\begin{tikzcd}
B_\varnothing, \ar[loop above, "\theta"]
\end{tikzcd}
\qquad \qquad
\Tmon_s =
\begin{tikzcd}
B_{\varnothing}(1) \ar[loop, in=0, out=20, distance=20, "\theta_s\otimes\id" pos=0.51] \ar[d, bend left=80, "\ \a_s^\vee \otimes \usebox\downdot" pos=0.38] \\
B_s \ar[u, "\usebox\updot"] \ar[loop, in=-30, out=-10, distance=30, "\theta_s\otimes \id" pos=0.5] \\
B_{\varnothing}(-1). \ar[u, "\usebox\downdot"] \ar[uu, bend left=60, "-\nu_s \otimes \id" near end] \ar[loop right, distance=20, "\theta\otimes\id"]
\end{tikzcd}
\]

Much of \cite{amrwa} was guided by the dream that the category of free-monodromic complexes (or at least a large subcategory thereof) should also be monoidal. To this end, \cite[\S6]{amrwa} defined an operation $\hatstar$ (``free-monodromic convolution'') for a certain class of free-monodromic complexes called ``convolutive'' and morphisms between them. For an expression $\uw = (s, \ldots, s)$, let
\[
 \Tmon_\uw := \Tmon_s \hatstar \cdots \hatstar \Tmon_s,
\]
and let $\Tilt_2$ be the full subcategory of $\FM_2$ consisting of $\Tmon_\uw\la n \ra$ for expressions $\uw$ and $n \in \Z$. As a particular case of the main result \cite[Theorem~11.4.2]{amrwa}, $\Tilt_2$ admits a monoidal structure with the operation $\hatstar$ and identity $\Tmon_\varnothing$. The hardest part of this result was to show that $\hatstar$ is a bifunctor, i.e.~that morphisms in $\Tilt_2$ satisfy the exchange law
\begin{equation} \label{eq:tilt-interchange}
(f'\hatstar g')\circ (f\hatstar g) = (f'\circ f)\hatstar (g'\circ g).
\end{equation}

In $\FM_2^\hoch$, the operation $\hatstar$ can be defined by the same formula (but replacing $\ustar$ by $\star$, see the Remark at the end of \S\ref{ss:diagram-seq}) for a similarly defined class of convolutive complexes and morphisms between them. Let $\Tilt_2^\hoch$ be the full subcategory of $\FM_2^\hoch$ consisting of $\Tmon_\uw\la n \ra\ll m \rr$ for expressions $\uw$ and $n, m \in \Z$.

\begin{conj} \label{conj:htilt-monoidal}
 $(\Tilt_2^\hoch, \hatstar, \Tmon_\varnothing)$ admits a monoidal structure extending that on $(\Tilt_2, \hatstar, \Tmon_\varnothing)$.
\end{conj}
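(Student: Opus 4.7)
My plan is to adapt the proof of \cite[Theorem~11.4.2]{amrwa} to the Hochschild-enhanced setting. First, I would transfer the definitions of convolutive complexes, convolutive morphisms, and the formula for $\hatstar$ from \cite[\S6]{amrwa} to $\HFMdg$, using $\star$ (rather than $\ustar$) as suggested by the remark at the end of \S\ref{ss:diagram-seq}. I would then verify that $\Tmon_\varnothing$ and $\Tmon_s$ are convolutive; since they involve only Hochschild-degree-$0$ data in their underlying sequences and differentials, this should follow essentially verbatim from \cite{amrwa}.

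Next, I would establish associativity of $\hatstar$ and that $\Tmon_\varnothing$ is a unit, following \cite[\S7--10]{amrwa}. These arguments are largely formal manipulations with the Koszul-type formulas defining $\hatstar$, and should carry over with only minor modifications to account for the enhanced parity form $\langle(i,j,k),(i',j',k')\rangle = ii'+kk'$, since nowhere in these formal steps is the cohomological degree used in a way that genuinely distinguishes it from the Hochschild degree.

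The main obstacle is the exchange law $(f'\hatstar g')\circ (f\hatstar g) = (f'\circ f)\hatstar (g'\circ g)$ for morphisms. The proof in \cite[\S11]{amrwa} is a delicate sign-tracking argument, and the presence in $\HDiag_2$ of generators of odd Hochschild parity (the hollow dot $\usebox\hdot$ and the exterior boxes $\xi_i$) introduces new signs when composing and tensoring that must be reconciled with the coefficients already appearing in the definition of $\hatstar$. A natural strategy is to reduce, via bifunctoriality in each argument separately (which is much easier than full bifunctoriality), to the case where at least one of the four morphisms is an elementary generator, and then check the exchange law diagrammatically. Since the additional generators and relations of $\HDiag_2$ beyond $\Diag_2$ involve only Hochschild-degree-$0$ and -$1$ morphisms, this should reduce to a finite and tractable check. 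A more conceptual alternative would be to realize $\HFMdg$ as obtained from the analogous non-enhanced dggg category via a Koszul-type tensoring with the exterior algebra $\HLambdav$ in a way that manifestly preserves the convolution operation, thereby deducing the enhanced monoidality from the one established in \cite{amrwa}.
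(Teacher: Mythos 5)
The statement you are addressing is a \emph{conjecture} in the paper, not a theorem: the authors do not prove it, and they state explicitly that it ``reduces to the exchange law \eqref{eq:tilt-interchange}, but even with the various simplifications possible in this $\GL_2$ case, it does not seem clear how to adapt the proof in \cite{amrwa}.'' Your proposal correctly identifies the same bottleneck the authors do, and the preliminary steps you describe (transferring the definition of convolutive complexes and of $\hatstar$ with $\star$ in place of $\ustar$, checking that $\Tmon_\varnothing$ and $\Tmon_s$ are convolutive, associativity and unitality) are indeed the routine part and are consistent with how the paper sets things up. But the proposal does not close the gap: it names two candidate strategies for the exchange law and carries out neither, so what you have is a research plan, not a proof.

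More concretely, the first strategy---reducing ``via bifunctoriality in each argument separately'' to the case where one of the four morphisms is an elementary generator of $\HDiag_2$ and then checking diagrammatically---does not obviously apply. Morphisms in $\Tilt_2^\hoch$ are homotopy classes of closed degree-zero elements of $\uHom_{\HFM}$ between free-monodromic complexes; they are not words in the diagrammatic generators, and $\hatstar$ on such morphisms is given by a Koszul-type formula involving correction terms in $\Lambda$ and $R^\vee$. The two sides of \eqref{eq:tilt-interchange} are therefore expected to agree only up to explicit homotopies that must be constructed, and this is exactly the ``delicate sign-tracking argument'' of \cite[\S11]{amrwa} whose adaptation the authors say is unclear: the new odd Hochschild parity in the form $\ip{(i,j,k),(i',j',k')}=ii'+kk'$ interacts with the $\nu_i$- and $y_i$-corrections in $\hatstar$ in a way that is not settled by a finite check on the generators \eqref{eqn:hgens}. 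The second, ``more conceptual'' strategy (realizing $\HFMdg$ as a tensoring of the non-enhanced category with $\HLambdav$ compatibly with convolution) is likewise only asserted; note that $\HDiag_2$ is not simply $\Diag_2\otimes\HLambdav$ as a monoidal category, because of the relation \eqref{eqn:exterior-forcing} in which $\xi$ fails to be central, so such a factorization would itself require proof. Until one of these routes is actually executed, the conjecture remains open, which is precisely the status the paper assigns it.
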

This again reduces to the exchange law \eqref{eq:tilt-interchange}, but even with the various simplifications possible in this $\GL_2$ case, it does not seem clear how to adapt the proof in \cite{amrwa}. We will return to this conjecture in future work.

In this paper, we content ourselves with defining an Ext-enhanced monoidal Koszul duality functor $\HPhi: \HDiag_2 \to \Tilt_2^\hoch$ (see Theorem~\ref{thm:hmkd}) assuming Conjecture~\ref{conj:htilt-monoidal}.

\subsection{A canonical free-monodromic morphism}
\label{ss:fm-phi}
In preparation for Theorem~\ref{thm:hmkd}, we now define and study an endomorphism $\hphi_s$ of $\Tmon_s$ that will be the image under $\HPhi$ of the degree $(-2,1)$ endomorphism of $B_s$ introduced in \eqref{eqn:hgens}.

Recall that $\Tmon_s$ has underlying diagram sequence $B_\varnothing(-1)[1]\ \oplus \ B_s (0)[0]\ \oplus \ B_\varnothing(1)[-1]$. Below, we define morphisms involving $\Tmon_s$ via matrices. Define the endomorphism
\begin{equation} \label{eqn:hphi-def}
\hphi_s \ := \ \sqmatrix{
0 &\ \  -\nu_s \otimes \usebox\hupdot\ \  & -1\otimes \xi_s\\
0 &  0 &  \nu_s \otimes \usebox\hdowndot\\
0 & 0 & 0 \\
} \ \in \ \uEnd_{\HFM}^{-2,2,1}(\Tmon_s),
\end{equation}
which can be depicted as follows:
\[
\begin{tikzcd}[row sep=large, column sep=large]
B_\varnothing(1)
\ar[loop left, distance=20, "\theta_s\otimes \id"]
\ar[d, bend left=60,"\alpha_s^\vee \otimes \usebox\downdot" pos=0.46]
\ar[rrrrdd, "-1 \otimes \xi_s"]
\ar[rrrrd, "\nu_s \otimes \usebox\hdowndot"]
&&&& B_\varnothing(1)
\ar[loop right, distance=20, "\theta_s\otimes \id"]
\ar[d, bend left=40, "\ \alpha_s^\vee\otimes \usebox\downdot "]
\\
B_s \ar[u, "\usebox\updot"]
\ar[loop right, in=-60, out=-20, distance=20, "\theta_s\otimes\id" pos=0.6]
\ar[rrrrd, "-\nu_s \otimes \usebox\hupdot"']
&&&&B_s
\ar[loop right, distance=20, "\theta_s\otimes \id"]
\ar[u, "\usebox\updot"]
\\
B_\varnothing(-1)
\ar[u, "\usebox\downdot"]
\ar[uu, bend left=60, "-\nu_s \otimes \id"]
\ar[loop left, distance=20, "\theta\otimes \id"]
&&&& B_\varnothing(-1).
\ar[loop right, distance=20, "\theta\otimes \id"]
\ar[u, "\usebox\downdot"]
\arrow[bend left=60]{uu}[near end]{-\nu_s \otimes \id}
\end{tikzcd}
\]
We will check that $\hphi_s$ is closed in the course of the proof of the following lemma, which is the main goal of this subsection.
\begin{lem} \label{lem:fm-phi-one-dimensional}
 We have
 \[
  \Hom_{\FM_2^\hoch}(\Tmon_s, \Tmon_s\la-2\ra\ll1\rr) = \bk \cdot \hphi_s.
 \]
\end{lem}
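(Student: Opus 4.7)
The plan is to unpack the definition: $\Hom_{\FM_2^\hoch}(\Tmon_s, \Tmon_s\la-2\ra\ll1\rr)$ is by construction the tridegree-$(-2,2,1)$ cohomology of $\uHom_{\HFM}(\Tmon_s,\Tmon_s)$, so the proof splits into two parts: (i) verifying that $\hphi_s$ is a cocycle, and (ii) showing that every cocycle is cohomologous to a scalar multiple of $\hphi_s$. For (i), I would check $\kappa(\hphi_s) + \d\circ\hphi_s - \hphi_s\circ\d = 0$ entry by entry. The only nonzero $\kappa$-contribution comes from $\kappa(\nu_s) = \alpha_s$, producing terms like $\alpha_s\otimes\usebox\hupdot$ and $\alpha_s\otimes\usebox\hdowndot$. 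The cross terms $\d\circ\hphi_s$ and $\hphi_s\circ\d$ are matrix products that, after invoking the diagrammatic relations \eqref{eqn:hdot-trivalent}--\eqref{eqn:exterior-forcing}, the forcing identities \eqref{eqn:downdot-hdowndot}--\eqref{eqn:updot-hupdot}, and the algebraic identities \eqref{eqn:theta-minus-theta-s-quadratic-odd-rels}, produce precisely the opposite contributions and cancel the $\kappa$-term.

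For (ii), the critical structural observation is that $K = R\otimes R^\vee\otimes\Lambda[\nu_1,\nu_2]$ has trivial cohomological grading outside of degrees $-2,-1,0$ (with $K^{-2} = \nu_1\nu_2\,R\,R^\vee$ and $K^{-1} = \nu_1 RR^\vee \oplus \nu_2 RR^\vee$). Since the $(i,j)$ matrix entry of a tridegree-$(-2,2,1)$ morphism must have $K$-cohomological degree $j-i-2$, the entries at positions $(0,-1), (1,-1), (1,0)$ are forced to vanish. For each of the remaining six positions, the presentation \eqref{eqn:hdiag-hom} of the $\HDiag_2$-Hom spaces, combined with the Soergel and Hochschild degree constraints, yields a finitely-generated module of candidate entries over $R\otimes R^\vee$. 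Imposing the closedness equation $\kappa(f) + \d\circ f - f\circ\d = 0$ cuts this down to a manageable space of cocycles.

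The exactness quotient is then easy: a tridegree-$(-3,2,1)$ cochain $h$ has $K$-cohomological degrees $j-i-3$ at its $(i,j)$-entry, so only the entries at $(-1,0),(-1,1),(0,1)$ can be nonzero, and the image of $h$ under $\kappa(-) + \d\circ(-) + (-)\circ\d$ is straightforward to compute explicitly. Quotienting the cocycle space by this image yields exactly $\bk\cdot\hphi_s$; in particular $\hphi_s$ is not a coboundary because its nonzero $(-1,1)$-entry $-1\otimes\xi_s$ cannot be produced from any $h$ (which at that position would have to land in $R\otimes R^\vee$ times a degree-zero diagram, with no $\xi_s$ available).

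The main obstacle is the bookkeeping for the uniqueness step: although the degree constraints prune the problem enormously, explicitly writing down the closedness equations across the six surviving positions and identifying the correct cobounding cochains at each stage is an extended computation, and one must track signs carefully because several entries of $K$ have odd cohomological degree. A conceptual shortcut via a filtration/spectral-sequence argument analogous to those in \cite{amrwa} is plausible but does not seem to meaningfully shorten this $\GL_2$ computation.
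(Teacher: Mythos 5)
Your proposal follows essentially the same route as the paper: constrain the matrix entries of a degree-$(-2,2,1)$ endomorphism by tridegree considerations, impose the closedness equation to force $f\in\bk\cdot\hphi_s$, verify directly that $\hphi_s$ is closed, and dispose of coboundaries by a degree analysis of $\uEnd^{-3,2,1}_{\HFM}(\Tmon_s)$ (which the paper shows is in fact zero once the Soergel and Hochschild degrees are also imposed, so no quotient computation is needed). One harmless misstatement: $K$ is not concentrated in cohomological degrees $-2,-1,0$ (the $y_i$ have cohomological degree $+2$, so $K$ is unbounded above, and $K^{-1}=\nu_1R\oplus\nu_2R$, $K^{-2}=\nu_1\nu_2R$ contain no $y$'s); what your argument actually uses, and what is true, is only that $K$ vanishes in cohomological degrees below $-2$.
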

The proof of Lemma~\ref{lem:fm-phi-one-dimensional} occupies the rest of this subsection.

Let $f \in \uEnd_{\HFM}^{-2,2,1}(\Tmon_s)$. We claim that for degree reasons, $f$ must be of the form
\[
f = \sqmatrix{
0 & r_3 \otimes \usebox\hupdot & 1\otimes \xi\\
0 & \ \ \ r_2 \otimes \usebox\hdot \ \ \  & r_1 \otimes \usebox\hdowndot\\
0 & 0 & 0
}
\]
for some $r_1, r_2, r_3 \in \Lambda[\nu_1, \nu_2]$, where $r_1, r_3$ are linear and $r_2$ is quadratic, and some linear $\xi \in V\ll-1\rr \subset \HLambdav$.

Let us comment briefly on this claim. For instance, consider the component $f_{31}$ of $f$.  This component lives in the hom space
\begin{multline*}
f_{31} \in \uHom_{\HFM}^{-2,2,1}(B_\varnothing(-1)[1], B_\varnothing(1)[-1]) 
\cong \uHom_{\HFM}^{-2,2,1}(B_\varnothing, B_\varnothing(2)[-2]) \\
\cong \uHom_{\HFM}^{-4,4,1}(B_\varnothing, B_\varnothing)
\cong (\k[x_1,x_2,y_1,y_2]\otimes_\k \Lambda[\nu_1,\nu_2,\xi_1,\xi_2])^{-4,4,1}.
\end{multline*}
To obtain the degree $(-4,4,1)$ we try to solve the equation
\[
a\deg x_i + b\deg y_i + c\deg \nu_i + e\deg \xi_i = (-4,4,1),\qquad\qquad a,b,c,e\in \Z_{\geq 0}.
\]
Given that $\deg x_i = (0,2,0)$, $\deg y_i = (2,-2,0)$, $\deg \nu_i = (-1,2,0)$, and $\deg \xi_i = (0,0,1)$, we see that $2b-c=-4$, which forces $c\geq 4$.  This implies that $f_{31}=0$ since any degree 3 and higher expression in the odd variables $\nu_1,\nu_2$ is zero.

The rest of the claim above may be checked by repeating a similar degree argument for each component using \eqref{eqn:hdiag-hom}.

We consider the equation for $f$ to be closed:
\begin{equation}\label{eqn:phi-s-candidate-chain-map}
\kappa(f) + \d_{\Tmon_s}\circ f - f \circ \d_{\Tmon_s} = 0.
\end{equation}
Compute:
{\small
\begin{eqnarray*}
    \d_{\Tmon_s}\circ f
&=&
    \sqmatrix{
    \theta\otimes \id & 0 & 0 \\
    1\otimes \usebox\downdot & \ \ \ \theta_s\otimes \id \ \ \ & \a_s^\vee\otimes \usebox\downdot \\
    -\nu_s\otimes \id & 1\otimes \usebox\updot & \theta_s\otimes \id
    }
    \circ 
    \sqmatrix{
    0 & r_3 \otimes \usebox\hupdot & 1\otimes \xi\\
    0 & \ \ \ r_2 \otimes \usebox\hdot \ \ \  & r_1 \otimes \usebox\hdowndot\\
    0 & 0 & 0
    } \\
&=&
\sqmatrix{
            \ 0  \ \ \ 
            &
            \theta r_3\otimes \usebox\hupdot
            &
            \ \ \ \theta\otimes \xi
            \\
            \ 0  \ \ \
            &
            -r_3\otimes \usebox\hupdotdowndot + \theta_s r_2\otimes \usebox\hdot
            &
            \ \ \ 1\otimes (\usebox\downdot\star\xi) +\theta_s r_1\otimes \usebox \hdowndot
            \\
            \ 0 \ \ \ 
            &
            -\nu_s r_3 \otimes \usebox\hupdot + r_2\otimes \usebox\ohupdot
            &
            \ \ \ - \nu_s\otimes \xi-r_1\otimes \usebox\hobarbell
            }
\end{eqnarray*}}
and
{\small
\begin{eqnarray*}
    f\circ \d_{\Tmon_s}
&=&
    \sqmatrix{
    0 & r_3 \otimes \usebox\hupdot & 1\otimes \xi\\
    0 & \ \ \ r_2 \otimes \usebox\hdot \ \ \  & r_1 \otimes \usebox\hdowndot\\
    0 & 0 & 0
    }
    \circ
    \sqmatrix{
    \theta\otimes \id & 0 & 0 \\
    1\otimes \usebox\downdot & \ \ \ \theta_s\otimes \id \ \ \ & \a_s^\vee\otimes \usebox\downdot \\
    -\nu_s\otimes \id & 1\otimes \usebox\updot & \theta_s\otimes \id
    }\\
&=&
    \sqmatrix{
        r_3\otimes \usebox\ohbarbell - \nu_s\otimes \xi
    &
        -r_3\theta_s \otimes \usebox\hupdot + 1\otimes (\xi\star \usebox\updot)
    &
        r_3 \alpha_s^\vee \otimes \usebox\ohbarbell + \theta_s\otimes \xi
    \\
        r_2\otimes \usebox\ohdowndot + r_1\nu_s \otimes \usebox\hdowndot
    &
       \ \  \ \ \ \ r_2\theta_s\otimes \usebox\hdot
    + r_1\otimes \usebox\updothdowndot \ \ \ \  \ \ 
    &
    r_2\a_s^\vee \otimes \usebox\ohdowndot - r_1\theta_s\otimes \usebox\hdowndot
    \\
    0 & 0 & 0
    }.
\end{eqnarray*}}

To obtain all the correct signs above, recall that the composition of morphisms in $\HFMdg$ satisfies $(r\otimes g)\circ (r'\otimes g') = (-1)^{|g||r'|} (rr')\otimes (g\circ g')$.  As a useful rule of thumb, remember that each component of $\d$ (resp.~$f$) has odd (resp.~even) cohomological degree.  Consider for example the $(1,2)$ component $f_{12}=r_3\otimes\usebox\hupdot$.  Since  $r_3$ is linear in $\nu_1,\nu_2$, it has odd cohomological degree.  Thus, in this context $\usebox\hupdot$ should also be regarded as being odd, since $f_{12}$ is even.

Now, viewing \eqref{eqn:phi-s-candidate-chain-map} as a matrix equation and taking the $(1,1)$, $(3,3)$ components (where $\kappa(f)$ is zero) yields
\begin{gather*}
0 = -r_3\otimes \usebox\ohbarbell + \nu_s\otimes \xi \overset{\eqref{eqn:hbarbell}}{=} -r_3 \otimes \xi_s + \nu_s \otimes \xi, \\
0 = -\nu_s\otimes \xi - r_1\otimes \usebox\hobarbell \overset{\eqref{eqn:hbarbell}}{=} -r_1 \otimes \xi_s - \nu_s \otimes \xi.
\end{gather*}
Thus we must have $\xi = c\xi_s$ and $r_1 = -r_3 = -c\nu_s$ for some $c \in \bk$.  For these choices for $r_1,r_3$, the $(2,1)$ component of \eqref{eqn:phi-s-candidate-chain-map} yields
\[
0 = -r_2 \otimes \usebox\ohdowndot + 0 \overset{\eqref{eqn:hupdot-hdowndot-def}}{=} -r_2 \otimes \usebox\hdowndot
\]
and $\usebox\hdowndot \neq 0$ by \eqref{eqn:hdiag-hom}, so $r_2 = 0$. Thus $f = -c \hphi_s$.

It remains to show that $\hphi_s$ is closed. We have
\[
\kappa(\hphi_s) = \sqmatrix{
0 & \ \ \ -1 \otimes (\a_s\star \usebox\hupdot) \ \ \ & 0\\
0 & 0  & 1 \otimes (\a_s\star \usebox\hdowndot)\\
0 & 0 & 0
}.
\]
Combined with the computations above, we obtain that $\kappa(\hphi_s)+\d_{\Tmon_s}\circ \hphi_s - \hphi_s\circ \d_{\Tmon_s}$ equals
{ \footnotesize
\[
\sqmatrix{
    0
&
   \ \ \ \ \ \  -\a_s\star\usebox\hupdot - \theta \nu_s\otimes \usebox\hupdot
    - \nu_s\theta_s \otimes \usebox\hupdot + \xi_s\star\usebox\updot \ \ \ \ \ \ 
&
   -\theta\otimes \xi_s + \nu_s \alpha_s^\vee \otimes \usebox\ohbarbell +\theta_s\otimes \xi_s 
\\
    0
&
  \nu_s\otimes \usebox\hupdotdowndot -\nu_s\otimes \usebox\updothdowndot
&
     \a_s\star\usebox\hdowndot - \usebox\downdot\star\xi_s +\theta_s\nu_s\otimes \usebox\hdowndot +\nu_s\theta_s\otimes \usebox\hdowndot
\\
0
&
\nu_s \nu_s \otimes \usebox\hupdot
&
0
}.
\]
}
The components in positions $(1,1)$, $(2,1)$, and $(3,3)$ are zero by the discussion above.  It is an exercise to show that the remaining entries are also zero using \eqref{eqn:theta-minus-theta-s-quadratic-odd-rels} and the defining relations in $\HDiag_2$.

Finally, degree considerations entirely similarly to the ones earlier show that
\[
 \uEnd_\HFM^{-3,2,1}(\Tmon_s) = 0,
\]
so $\hphi_s$ is not nullhomotopic. This completes the proof of Lemma~\ref{lem:fm-phi-one-dimensional}.

\subsection{Free-monodromic ``Hochschild unit and counit'' morphisms}
\label{ss:fm-hunit}
As in \cite[\S5.3.4]{amrwa}, define morphisms $\hat\eta_s \in \uHom_{\HFM}^{1,-1,0}(\Tmon_\varnothing,\Tmon_s)$ and $\hat\epsilon_s \in \uHom_{\HFM}^{1,-1,0}(\Tmon_s, \Tmon_\varnothing)$ by the matrices
\begin{equation} \label{eqn:heta-heps-def}
\hat\eta_s \ \ := \ \ 
\sqmatrix{-\a_s^\vee\otimes\id \\ 0\\ 1\otimes \id},
\qquad\qquad
\hat\epsilon_s \ \ :=  \ \ \sqmatrix{  -1\otimes \id \ \  & 0 & \ \ 0} .
\end{equation}
These morphisms can be depicted by the following pictures:
\footnotesize{
\[
\begin{tikzcd}[column sep=50pt]
&B_\varnothing(1)
\ar[loop, in=0, out=20, distance=20, "\theta_s" pos=0.51]
\ar[d, bend left=80, "\ \a_s^\vee \otimes \usebox\downdot\ \  ", pos=.38] \\
B_\varnothing \ar[loop left,"\theta"]
  \ar[ur, bend left=10, "\id"]
  \ar[dr, bend right=10, "-\a_s^\vee \otimes \id"']
&B_s \ar[u, "\usebox\updot"] \ar[loop right, in=-20, out=0, distance=20, "\theta_s" pos=0.6] \\
&B_\varnothing(-1), \ar[u, "\usebox\downdot"] \ar[uu, bend left=45, "-\nu_s \otimes \id"] \ar[loop right, distance=20, "\theta"]
\end{tikzcd}
\qquad\qquad
\begin{tikzcd}[column sep=50pt]
B_\varnothing(1) \ar[loop left, distance=20, "\theta_s"] \ar[d, bend left=80, "\  \a_s^\vee \otimes \usebox\downdot" pos=0.35] \\
B_s \ar[u, "\usebox\updot"] \ar[loop right, in=-20, out=0, distance=20, "\theta_s"]
& B_\varnothing.\ar[loop right, distance=20, "\theta"] \\
B_\varnothing(-1) \ar[u, "\usebox\downdot"] \ar[uu, bend left=45, "-\nu_s \otimes \id"] \ar[loop left, distance=20, "\theta"]
\ar[ur, bend right=10, "-\id"']
\end{tikzcd}
\]
}
\normalsize

\begin{rmk}
It is an exercise (see also \cite[\S5.3.4]{amrwa}) to show that these morphisms are closed; they can be viewed as degree zero chain maps $\Tmon_\varnothing \ip{-1}\rightarrow \Tmon_s$ and $\Tmon_s\rightarrow \Tmon_\varnothing\ip{1}$.
\end{rmk}

Now, define the (closed) morphisms
\begin{equation} \label{eqn:fm-hdot-defn}
 \hheta_s = \hphi_s \circ \hat\eta_s \in \uHom_{\HFM}^{-1,1,1}(\Tmon_\varnothing, \Tmon_s), \qquad\qquad \hheps_s = \hat\epsilon_s \circ \hphi_s \in \uHom_{\HFM}^{-1,1,1}(\Tmon_s,\Tmon_\varnothing),
\end{equation}
given in terms of components by
\[
\hheta_s = \sqmatrix{-1\otimes \xi_s \\ \nu_s \otimes \usebox\hdowndot \\ 0},
\qquad\qquad
\hheps_s = \sqmatrix{0 &\ \ \nu_s\otimes \usebox\hupdot  \ \ & 1\otimes \xi_s},
\]
which can be depicted as follows:
\footnotesize{
\[
\begin{tikzcd}[column sep=50pt]
& B_\varnothing(1)
\ar[loop, in=0, out=20, distance=20, "\theta_s" pos=0.51]
\ar[d, bend left=80, "\ \alpha_s^\vee \otimes \usebox\downdot" pos=0.37] \\
B_\varnothing
\ar[loop left, distance=20, "\theta"]
\ar[r, "\nu_s \otimes \usebox\hdowndot" pos=0.4]
\ar[rd, bend right=10, "-1 \otimes \xi_s"' pos=0.6] & B_s
\ar[u, "\usebox\updot"] \ar[loop, in=-20, out=0, distance=30, "\theta_s" pos=0.6] \\
& B_\varnothing(-1),
\ar[u, "\usebox\downdot"]
\arrow[bend left=50]{uu}[pos=0.85]{-\nu_s \otimes \id}
\ar[loop right, distance=20, "\theta"]
\end{tikzcd}
\qquad\qquad
\begin{tikzcd}[column sep=50pt]
B_\varnothing(1)
\ar[loop left, distance=20, "\theta_s"]
\ar[d, bend left=40, "\alpha_s^\vee \otimes \usebox\downdot" pos=0.48]
\ar[rd, bend left=10, "1 \otimes \xi_s"] \\
B_s \ar[u, "\usebox\updot"]
\ar[loop, in=-50, out=-20, distance=20, "\theta_s" pos=0.6]
\ar[r,  "\nu_s \otimes \usebox\hupdot"'] & B_\varnothing.
\ar[loop right, distance=20, "\theta"]\\
B_\varnothing(-1) \ar[u, "\usebox\downdot"]
\ar[uu, bend left=50, "-\nu_s \otimes \id"]
\ar[loop left, distance=20, "\theta"]
\end{tikzcd}
\]
}
\normalsize

The following is the free-monodromic analogue of \eqref{eqn:exterior-forcing}.
\begin{lem} \label{lem:fm-exterior-forcing}
 Let $\xi \in V\ll-1\rr \subset \HLambdav$. We have
 \begin{equation} \label{eqn:fm-exterior-forcing}
  \id_{\Tmon_s} \hatstar \xi = s(\xi) \hatstar \id_{\Tmon_s} + \alpha_s(\xi)\hheta_s \circ \hat\epsilon_s
 \end{equation}
as morphisms $\Tmon_s \to \Tmon_s\ll1\rr$ in $\FM_2^\hoch$.
\end{lem}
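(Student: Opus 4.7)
The plan is to verify the identity by $\bk$-linearity in $\xi \in V\ll-1\rr$, reducing to two independent cases on a convenient basis. Since the right-hand side depends on $s(\xi)$ and $\alpha_s(\xi)$, the natural splitting (at least when $\operatorname{char}\bk \neq 2$) is into the $s$-invariant subspace, spanned e.g.\ by $\xi_1 + \xi_2$ (on which $\alpha_s$ vanishes and $s$ acts trivially), and the line $\bk\cdot \xi_s = \bk\cdot(\xi_1 - \xi_2)$ (on which $s$ acts by $-1$ and $\alpha_s(\xi_s) = 2$). In characteristic two one works directly with the basis $\{\xi_1,\xi_2\}$ and makes small adaptations; but the bookkeeping is cleanest in the split form.

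For $\xi$ in the $s$-invariant subspace, \eqref{eqn:fm-exterior-forcing} reduces to the centrality statement $\id_{\Tmon_s} \hatstar \xi = \xi \hatstar \id_{\Tmon_s}$. One unwinds the definition of $\hatstar$, which is considerably simpler here because one factor is the unit $\Tmon_\varnothing$, and checks the equality entry by entry on the decomposition $\Tmon_s = B_\varnothing(-1)[1] \oplus B_s \oplus B_\varnothing(1)[-1]$. On the $B_s$ summand this is exactly \eqref{eqn:exterior-forcing} with $\alpha_s(\xi) = 0$; on the two $B_\varnothing$ summands both sides restrict to $\xi$ itself; and the off-diagonal entries coincide because $\xi$ graded-commutes with the (Soergel-linear) components of $\d_{\Tmon_s}$, whose $K$-coefficients lie in $R \subset K$ and are untouched by $\xi$.

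The substantive case is $\xi = \xi_s$, where the identity becomes
\[
 \id_{\Tmon_s} \hatstar \xi_s + \xi_s \hatstar \id_{\Tmon_s} = 2\, \hheta_s \circ \hat\epsilon_s.
\]
I would write all three morphisms explicitly as $3\times 3$ matrices with respect to the same decomposition of $\Tmon_s$. The composition $\hheta_s \circ \hat\epsilon_s$ is the outer product of the column in \eqref{eqn:fm-hdot-defn} with the row $\hat\epsilon_s$ of \eqref{eqn:heta-heps-def}, so it is a sparse matrix supported in its first column. The two convolutions $\id_{\Tmon_s} \hatstar \xi_s$ and $\xi_s \hatstar \id_{\Tmon_s}$ are computed by unrolling the definition of $\hatstar$ when one factor is $\Tmon_\varnothing$. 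The $B_s$-diagonal entry of their sum produces the correction $\alpha_s(\xi_s)\,\usebox\hupdot \circ \usebox\downdot$ coming from \eqref{eqn:exterior-forcing}, which matches the corresponding entry of $2\,\hheta_s \circ \hat\epsilon_s$; the diagonal entries on the $B_\varnothing$ summands (where $\xi_s$ simply commutes with itself) cancel; and the remaining off-diagonal entries are compared using \eqref{eqn:hbarbell}, the shorthand \eqref{eqn:hupdot-hdowndot-def}, and the relations among $\xi_s$ and the dots, either agreeing on the nose or up to an explicit homotopy one writes down.

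The main obstacle is the bookkeeping in this matrix computation: one must carefully track the signs introduced by the parity form $\ip{(i,j,k),(i',j',k')} = ii' + kk'$ while the morphism $\xi_s$ simultaneously contributes to the Hochschild parity and passes through the Koszul differential $\d_{\Tmon_s}$, whose components have mixed cohomological parity. The calculation is similar in spirit to, but more involved than, the verification in \S\ref{ss:fm-phi} that $\hphi_s$ is closed; and since the equality ultimately holds in the homotopy category $\FM_2^\hoch$, some entries are likely to match only up to homotopy, so identifying the correct witness homotopy in terms of the generators of \eqref{eqn:ogens} and \eqref{eqn:hgens} is the subtlest step.
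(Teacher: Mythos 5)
Your overall strategy---reduce $\hatstar$ to $\star$ because neither morphism carries nontrivial monodromy components, write everything as $3\times 3$ matrices over the decomposition $B_\varnothing(-1)[1]\oplus B_s\oplus B_\varnothing(1)[-1]$, and absorb the discrepancy into a homotopy---is exactly the paper's route, and your observation that $\hheta_s\circ\hat\epsilon_s$ is an outer product supported in the first column is correct. (The case split into $s$-invariants and $\bk\xi_s$ is unnecessary, though: the paper works uniformly with $\xi-s(\xi)=\alpha_s(\xi)\xi_s$, which avoids your characteristic-$2$ caveat altogether.) However, there are two genuine problems. First, your entry-by-entry accounting is wrong in the two places that matter most: since $\alpha_s(\xi)\hheta_s\circ\hat\epsilon_s$ vanishes outside the first column, the $B_s$-diagonal entry of the difference is $\alpha_s(\xi)$ times the nonzero dot composite produced by \eqref{eqn:exterior-forcing} (it does not ``match the corresponding entry,'' which is $0$), and the lower $B_\varnothing$-diagonal entry is $\alpha_s(\xi)\xi_s$ rather than $0$ (left and right multiplication by $\xi$ on $B_\varnothing(-1)$ agree, so nothing cancels there). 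Together with the $(2,1)$ entry $\alpha_s(\xi)\,\nu_s\otimes\usebox\hdowndot$, these three surviving entries are precisely what the homotopy must kill; your accounting would send you hunting for a homotopy in the wrong slots.

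Second, and decisively: the nullhomotopy is the entire content of the lemma, and you do not produce it---you only flag it as ``the subtlest step.'' The paper exhibits the single homotopy $h$ whose unique nonzero entry is $\usebox\hdowndot$ in position $(2,3)$, checks $\kappa(h)=0$, and verifies that $\d_{\Tmon_s}\circ h+h\circ\d_{\Tmon_s}$ reproduces all three surviving entries simultaneously (using \eqref{eqn:hbarbell} to produce the $\xi_s$ in the corner). Without this witness, and with the surviving entries misidentified, the proposal does not yet constitute a proof.
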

\begin{proof}
According to the definition of $\hatstar$, we have $\id_{\Tmon_s} \hatstar \xi = \id_{\Tmon_s} \star \xi$ and $s(\xi) \hatstar \id_{\Tmon_s} = s(\xi) \star \id_{\Tmon_s}$ since these morphisms do not involve nontrivial $\Lambda$ or $R^\vee$ components. Using this, the equality $\xi - s(\xi) = \alpha_s(\xi)\xi_s$, and the explicit chain maps representing $\hheta_s$ and $\hat\epsilon_s$ described above, one computes that
\[
 \id_{\Tmon_s} \hatstar \xi - s(\xi) \hatstar \id_{\Tmon_s} - \alpha_s(\xi)\hheta_s \circ \hat\epsilon_s
\]
may be represented as
\begin{multline*}
\sqmatrix{\xi -s(\xi)&0&0\\
0 & \ \ \ \id_{B_s}\star \xi - s(\xi)\star\id_{B_s}\ \ \ & 0 \\
0&0&\xi -s(\xi)}
- \a_s(\xi)\sqmatrix{
\xi_s&0&0\\
-\nu_s\otimes\usebox\hdowndot &0&0\\
0&\ \ \ \ 0\ \ \ \ &0} \\
\overset{\eqref{eqn:exterior-forcing}}{=} \a_s(\xi)\sqmatrix{
0&0&0\\
\nu_s\otimes\usebox\hdowndot & \ \ \ \ \usebox\updothdowndot \ \ \ \ &0 \\
0&0& \xi_s}.
\end{multline*}

This chain map is nullhomotopic with nullhomotopy given by $\a_s(\xi)h$, where $h =\smMatrix{0&0&0\\ 0&\ \ 0\ \ & \usebox\hdowndot \\ 0&0&0}$.  Indeed, $\kappa(h)=0$ and
{\footnotesize
\begin{multline*}
\d_{\Tmon_s}\circ h + h\circ \d_{\Tmon_s} =
\sqmatrix{\theta & \ \ 0 \ \ & 0 \\ \usebox\downdot & \theta_s & \a_s^\vee\otimes\usebox\downdot \\ -\nu_s & \usebox\updot & \theta_s}
\circ
\sqmatrix{0&0&0\\ 0 &\ \ 0\ \ & \usebox\hdowndot \\ 0&0&0} \\
+
\sqmatrix{0&0&0\\ 0&\ \ 0\ \ & \usebox\hdowndot \\ 0&0&0}
\circ
\sqmatrix{\theta & \ \ 0 \ \ & 0 \\ \usebox\downdot & \theta_s & \a_s^\vee\otimes\usebox\downdot \\ -\nu_s & \usebox\updot & \theta_s}
\\
=
\sqmatrix{0&\ \ \ 0\ \ \ \ &0\\ 0 & 0 & \theta_s\otimes\usebox\hdowndot \\ 0 &0 & \usebox\hobarbell}
+
\sqmatrix{0&\ \ \ 0\ \ \ \ &0\\ \nu_s\otimes \usebox\hdowndot & \usebox\updothdowndot
& -\theta_s\otimes \usebox\hdowndot \\ 0 & 0& 0 }
 \overset{\eqref{eqn:hbarbell}}{=} 
\sqmatrix{0&\ \ \ 0\ \ \ \ &0\\ \nu_s\otimes \usebox\hdowndot & \usebox\updothdowndot
& 0 \\ 0 & 0& \xi_s}.
\end{multline*}}
\end{proof}

\section{Ext-enhanced monoidal Koszul duality}
\label{sec:hmkd}

Let $\varphi: V^* \simto V$ be the isomorphism $\varphi(x_i) = y_i$. Then $\varphi$ identifies the $\GL_2$ realization of $S_2$ with its dual (in particular, $\varphi$ is $S_2$-equivariant and $\varphi(\alpha_s) = \alpha_s^\vee$), hence induces a monoidal equivalence between $\Diag_2$ and the diagrammatic Hecke category associated to the dual realization. Composing this with the monoidal Koszul duality functor $\Phi$ of \cite[Theorem~4.1]{amrwb}, we obtain a $\bk$-linear monoidal equivalence
\[
 \Phi_\sym: (\Diag_2, \star, B_\varnothing) \simto (\Tilt_2, \hatstar, \Tmon_\varnothing)
\]
satisfying $\Phi_\sym \circ (1) = \la1\ra \circ \Phi_\sym$. (Here, $\sym$ stands for ``self-dual.'') Extend $\varphi$ multiplicatively to a $\bk$-algebra isomorphism $\varphi: R \simto R^\vee$. The functor $\Phi_\sym$ is defined by generators and relations: on objects by $\Phi_\sym(B_s) = \Tmon_s$, and on morphisms by
\begin{subequations} \label{eq:Phi-defn}
\begin{gather}
 \Phi_\sym\left(
  \begin{tikzpicture}[thick,scale=0.07,baseline=-2pt]
   \node at (0,0) {$x$};
   \draw (-3,-3) rectangle (3,3);
  \end{tikzpicture}
 \right)
 := \mu_{\Tmon_\varnothing}(\varphi(x)) : \Tmon_\varnothing \to \Tmon_\varnothing\la2m\ra, \quad \text{where } x \in R^{0,2m,0}, \\
 \Phi_\sym\left(
  \begin{tikzpicture}[thick,scale=0.07,baseline]
   \draw (0,5) to (0,0);
   \node at (0,0) {$\bullet$};
  \end{tikzpicture}
 \right) := \hat\eta_s: \Tmon_\varnothing\la -1\ra \to \Tmon_s,
 \qquad
 \Phi_\sym\left(
  \begin{tikzpicture}[thick,scale=0.07,baseline=-5pt]
   \draw (0,-5) to (0,0);
   \node at (0,0) {$\bullet$};
  \end{tikzpicture}
 \right) := \hat\epsilon_s: \Tmon_s \to \Tmon_\varnothing\la 1\ra, \\
 \Phi_\sym\left(
  \begin{tikzpicture}[thick,baseline=-2pt,scale=0.07]
   \draw (-4,5) to (0,0) to (4,5);
   \draw (0,-5) to (0,0);
  \end{tikzpicture}
  \right) := \hat{b}_1: \Tmon_s \to \Tmon_s \hatstar \Tmon_s\la-1\ra,
  \qquad
 \Phi_\sym\left(
  \begin{tikzpicture}[thick,baseline=-2pt,scale=-0.07]
   \draw (-4,5) to (0,0) to (4,5);
   \draw (0,-5) to (0,0);
  \end{tikzpicture}
  \right) := \hat{b}_2: \Tmon_s \hatstar \Tmon_s \to \Tmon_s\la-1\ra.
\end{gather}
\end{subequations}
Here, $\mu_{\Tmon_\varnothing}$ is the left monodromy action defined in \cite[Theorem~5.2.2]{amrwa}. The morphisms $\hat\eta_s, \hat\epsilon_s$ were defined explicitly (see \eqref{eqn:heta-heps-def}), whereas $\hat{b}_1, \hat{b}_2$ were defined more indirectly in \cite[\S4.2.3]{amrwb}. See \cite[\S4.2]{amrwb} for more details.

\begin{thm} \label{thm:hmkd}
If Conjecture~\ref{conj:htilt-monoidal} holds, then there exists a $\bk$-linear monoidal functor
\[
 \HPhi: (\HDiag_2, \star, B_\varnothing) \to (\Tilt_2^\hoch, \hatstar, \Tmon_\varnothing)
\]
extending $\Phi$. In particular, $\Phi(B_\uw) = \Tmon_\uw$ for any expression $\uw$, and $\HPhi \circ (1) = \langle 1 \rangle \circ \HPhi$. Moreover, $\HPhi \circ \ll1\rr = \ll1\rr \circ \HPhi$.
\end{thm}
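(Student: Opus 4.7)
The plan is to construct $\HPhi$ as a strict monoidal functor by generators and relations, extending the definition of $\Phi_\sym$ given in \eqref{eq:Phi-defn}. On objects, monoidality forces $\HPhi(B_\uw) := \Tmon_\uw$. On the original generators of $\Diag_2$, we set $\HPhi$ to agree with $\Phi_\sym$. For the two new generators in \eqref{eqn:hgens}, we define $\HPhi(\usebox\hdot) := \hphi_s$, the canonical morphism supplied by Lemma~\ref{lem:fm-phi-one-dimensional}, and for each exterior box labeled by $\xi \in \HLambdav$ we set $\HPhi(\xi) := 1 \otimes \xi \in \uEnd_\HFM(\Tmon_\varnothing)$, via the obvious embedding $\HR = \gEnd(B_\varnothing) \hookrightarrow \uEnd_\HFM(\Tmon_\varnothing)$ (the $R$-factor of $\HR$ acting through $\varphi$, as in $\Phi_\sym$). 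A short computation confirms that each such morphism is closed and of the correct tridegree.

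With $\HPhi$ thus defined on generators, the main task is to verify the defining relations of $\HDiag_2$. The relations inherited from $\Diag_2$ hold by the already-established $\Phi_\sym$. Of the new relations: \eqref{eqn:hbarbell} reduces via \eqref{eqn:fm-hdot-defn} to the matrix identity $\hat\epsilon_s \circ \hphi_s \circ \hat\eta_s = 1 \otimes \xi_s$, a direct calculation with the formulas \eqref{eqn:heta-heps-def} and \eqref{eqn:hphi-def}; \eqref{eqn:hdot-square} reduces to $\hphi_s \circ \hphi_s = 0$, immediate from \eqref{eqn:hphi-def} together with $\nu_s^2 = \xi_s^2 = 0$; \eqref{eqn:exterior-boxes-mult} holds because $\HR$ embeds as a bigraded algebra into $\End_\HFM(\Tmon_\varnothing)$, compatibly with the $R$-action already used by $\Phi_\sym$; and \eqref{eqn:exterior-forcing} is precisely the content of Lemma~\ref{lem:fm-exterior-forcing}.

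The trivalent relations \eqref{eqn:hdot-trivalent} --- saying that the hollow dot slides freely through a trivalent vertex --- are the substantive ones, since they involve the monoidal product and hence genuinely require Conjecture~\ref{conj:htilt-monoidal}. Granted the conjecture, their verification reduces to identities of the form $\hat b_2 \circ (\hphi_s \hatstar \id_{\Tmon_s}) = \hphi_s \circ \hat b_2$, together with the three other placements of the hollow dot and the analogous identities involving $\hat b_1$. These can in principle be checked by direct matrix calculation using \eqref{eqn:hphi-def} and the explicit formulas for $\hat b_1, \hat b_2$ from \cite[\S4.2.3]{amrwb}; I expect this to be the main computational obstacle, since the underlying sequence of $\Tmon_s \hatstar \Tmon_s$ has nine summands, so the resulting matrices are large. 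Recognizing the difference as nullhomotopic (rather than zero on the nose) will likely be necessary, by analogy with the nullhomotopy used to conclude Lemma~\ref{lem:fm-exterior-forcing}.

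Finally, the two stated grading compatibilities are verified at the level of generators: $\HPhi \circ (1) = \la 1\ra \circ \HPhi$ is inherited from $\Phi_\sym$ together with the fact that $\hphi_s$ and each $\HPhi(\xi_i)$ have Soergel degrees matching their counterparts in $\HDiag_2$, while $\HPhi \circ \ll 1\rr = \ll 1\rr \circ \HPhi$ is automatic from the construction, since the Hochschild shift on $\Tilt_2^\hoch$ is inherited from that on $\HDiag_2$ via the $\Seqb(\DS')$ construction.
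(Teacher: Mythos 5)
Your setup matches the paper's: the images of the generators, the observation that the $\Diag_2$-relations come for free from $\Phi_\sym$, and the reduction of \eqref{eqn:hbarbell}, \eqref{eqn:hdot-square}, \eqref{eqn:exterior-boxes-mult} and \eqref{eqn:exterior-forcing} to short computations and to Lemma~\ref{lem:fm-exterior-forcing} are all exactly as in the paper. The gap is in the trivalent relations \eqref{eqn:hdot-trivalent}, which you correctly single out as the substantive step but then leave as an unexecuted brute-force computation: you propose to verify $\hat{b}_2 \circ (\hphi_s \hatstar \id_{\Tmon_s}) = \hphi_s \circ \hat{b}_2$ and its variants by writing out explicit matrices for $\hat{b}_1,\hat{b}_2$ on the nine-term sequence underlying $\Tmon_s \hatstar \Tmon_s$ and exhibiting a homotopy. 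This is not carried out, and it is not clearly feasible as stated: the paper stresses that $\hat{b}_1,\hat{b}_2$ were defined ``more indirectly'' in \cite[\S4.2.3]{amrwb}, so closed-form matrices are not readily at hand, and you would additionally have to produce the nullhomotopy you anticipate needing.

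The paper's proof avoids all of this with a soft argument whose ingredients you already have but did not assemble. The whole point of Lemma~\ref{lem:fm-phi-one-dimensional} is that $\Hom_{\FM_2^\hoch}(\Tmon_s, \Tmon_s\la-2\ra\ll1\rr)$ is one-dimensional, spanned by $\hphi_s$; hence the ``dot-popping'' morphism $\hat{b}_2\circ(\hheta_s\hatstar\id_{\Tmon_s})$ is automatically $c\,\hphi_s$ for some scalar $c$. Precomposing with $\hat\eta_s$ and using the exchange law (this is exactly where Conjecture~\ref{conj:htilt-monoidal} is invoked) together with the Frobenius unit relation \eqref{eqn:fm-frob-unit-rels} gives $\hheta_s = c\,\hheta_s$, whence $c=1$ since $\hheta_s\neq 0$. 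This establishes \eqref{eqn:fm-hdot-pop} with no matrix computation and no explicit homotopy, and the full set of relations \eqref{eqn:hdot-trivalent} then follows purely diagrammatically from \eqref{eqn:fm-hdot-pop} combined with Frobenius associativity, which already holds in the image of $\Diag_2$. Without this reduction (or an actual execution of the computation you only sketch), your argument does not yet establish the theorem.
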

In fact, we expect $\HPhi$ to be an equivalence.  One reason to believe in this Koszul duality relating Ext-enhanced categories $\HDiag_2$ and $\Tilt_2^\hoch$ is that it is closely related to---and in a certain sense would explain---the ``mirror symmetry'' of triply-graded Khovanov--Rozansky homology of knots \cite{dgr}, which has been verified in all available computations \cite{eh, hog2, Mellit, HogMellit}.


The proof of Theorem~\ref{thm:hmkd} occupies the rest of this paper. As in \cite{amrwb}, we construct $\HPhi$ by generators and relations: we specify the images of the generating morphisms of $\HDiag_2$, and check that these images satisfy the defining relations of $\HDiag_2$.

The images of the generating morphisms of $\Diag_2$ are the same as for $\Phi_\sym$, as given in \eqref{eq:Phi-defn}. For the new generators \eqref{eqn:hgens}, define
\begin{subequations}
\begin{gather}
 \HPhi\left(
  \begin{tikzpicture}[thick,scale=0.07,baseline=-2pt]
   \node at (0,0) {$\xi$};
   \draw (-3,-3) rectangle (3,3);
  \end{tikzpicture}
 \right)
 :=
 1 \otimes
 \begin{tikzpicture}[thick,scale=0.07,baseline=-2pt]
  \node at (0,0) {$\xi$};
   \draw (-3,-3) rectangle (3,3);
 \end{tikzpicture}
 \otimes 1: \Tmon_\varnothing \to \Tmon_\varnothing\ll m\rr, \quad \text{where } \xi \in (\HLambdav)^{0,0,m}, \\
 \HPhi\left(
  \begin{tikzpicture}[thick,scale=0.07,baseline=-3pt]
   \node at (-0.5,0) {$\phantom{.}$};
   \draw (0,-5) to (0,5); \node[hdot] at (0,0) {};
   \node at (0.5,0) {$\phantom{.}$};
  \end{tikzpicture}
 \right)
 :=
 \hphi_s: \Tmon_s \to \Tmon_s\la-2\ra\ll1\rr.
\end{gather}
\end{subequations}
Then $\HPhi(\usebox\hdowndot) = \hheta_s$ and $\HPhi(\usebox\hupdot) = \hheps_s$ by \eqref{eqn:hupdot-hdowndot-def} and \eqref{eqn:fm-hdot-defn}.

Let us verify the defining relations of $\HDiag_2$ for these images. By \cite[Theorem~4.1]{amrwb}, the relations of $\Diag_2$ hold. In particular, we will use below the Frobenius unit relations
\begin{equation} \label{eqn:fm-frob-unit-rels}
  (\id_{\Tmon_s} \hatstar \hat{\epsilon}_s) \circ  \hat{b}_1  = \id_{\Tmon_s}, \qquad \hat{b}_2 \circ ( \id_{\Tmon_s} \hatstar \hat{\eta}_s) = \id_{\Tmon_s}.
\end{equation}

It remains to verify the new relations from \S\ref{ss:hsbim-diag}. The relations \eqref{eqn:hbarbell} and \eqref{eqn:hdot-square}, which say $\hat\epsilon_s \circ \hphi_s \circ \hat\eta_s = \xi_s$ and $\hphi_s \circ \hphi_s = 0$, follow by direct computation via the explicit chain maps for $\hphi_s, \hat\epsilon_s, \hat\eta_s$. The relation \eqref{eqn:exterior-boxes-mult} is clear, and \eqref{eqn:exterior-forcing} was verified in \eqref{eqn:fm-exterior-forcing}.

Only the relations \eqref{eqn:hdot-trivalent} remain. First, let us show that
\begin{equation} \label{eqn:fm-hdot-pop}
  \hat{b}_2 \circ (\hheta_s \hatstar \id_{\Tmon_s}) = \hphi_s = \hat{b}_2 \circ (\id_{\Tmon_s} \hatstar \hheta_s).
\end{equation}
(Here and in what follows, we omit shifts on morphisms from the notation.) We prove the first equality; the second equality is similar. By Lemma~\ref{lem:fm-phi-one-dimensional}, $\hat{b}_2 \circ (\hheta_s \hatstar \id_{\Tmon_s}) = c\hphi_s$ for some $c \in \bk$. Then
\[
 \hheta_s \overset{\eqref{eqn:fm-frob-unit-rels}}{=} \hat{b}_2 \circ (\id_{\Tmon_s} \hatstar \hat\eta_s) \circ (\hheta_s \hatstar \id_{\Tmon_\varnothing}) = \hat{b}_2 \circ (\hheta_s \hatstar \id_{\Tmon_s}) \circ (\id_{\Tmon_\varnothing} \hatstar \hat\eta_s) = c\hphi_s \circ \hat\eta_s \overset{\eqref{eqn:fm-hdot-defn}}{=} c\hheta_s
\]
and $\hheta_s \neq 0$, so $c = 1$, which proves \eqref{eqn:fm-hdot-pop}.

Diagrammatically, \eqref{eqn:fm-hdot-pop} says that the images of $\HPhi$ satisfy
\begin{equation} \label{eqn:hdot-pop}
\begin{array}{c}\begin{tikzpicture}[xscale=0.3,yscale=0.4,thick,baseline]
 \draw (-1,-1) -- (0,0); \draw (2,-2) -- (0,0) -- (0,1); \node[hdot] at (-1,-1) {};
\end{tikzpicture}\end{array}
=
\begin{array}{c}\begin{tikzpicture}[xscale=0.3,yscale=0.4,thick,baseline]
 \draw (0,-1.5) -- (0,1.5); \node[hdot] at (0,0) {};
\end{tikzpicture}\end{array}
=
\begin{array}{c}\begin{tikzpicture}[xscale=-0.3,yscale=0.4,thick,baseline]
 \draw (-1,-1) -- (0,0); \draw (2,-2) -- (0,0) -- (0,1); \node[hdot] at (-1,-1) {};
\end{tikzpicture}\end{array} \ \ , \qquad
\begin{array}{c}\begin{tikzpicture}[xscale=0.3,yscale=-0.4,thick,baseline]
 \draw (-1,-1) -- (0,0); \draw (2,-2) -- (0,0) -- (0,1); \node[hdot] at (-1,-1) {};
\end{tikzpicture}\end{array}
=
\begin{array}{c}\begin{tikzpicture}[xscale=0.3,yscale=-0.4,thick,baseline]
 \draw (0,-1.5) -- (0,1.5); \node[hdot] at (0,0) {};
\end{tikzpicture}\end{array}
=
\begin{array}{c}\begin{tikzpicture}[xscale=-0.3,yscale=-0.4,thick,baseline]
 \draw (-1,-1) -- (0,0); \draw (2,-2) -- (0,0) -- (0,1); \node[hdot] at (-1,-1) {};
\end{tikzpicture}\end{array} \ \ .
\end{equation}
Now, \eqref{eqn:hdot-trivalent} can be deduced from \eqref{eqn:hdot-pop} and Frobenius associativity in $\Diag_2$:
\begin{gather*}
\begin{array}{c}\begin{tikzpicture}[xscale=0.25,yscale=0.3,thick,baseline]
 \draw (-2,-2) -- (0,0) -- (0,2); \draw (2,-2) -- (0,0); \node[hdot] at (-1,-1) {};
\end{tikzpicture}\end{array}
\overset{\eqref{eqn:hdot-pop}}{=}
\begin{array}{c}\begin{tikzpicture}[xscale=0.25,yscale=0.3,thick,baseline]
 \draw (-1.5,-1.5) -- (0,0) -- (0,2); \draw (0.5,-2) -- (-0.75,-0.75); \draw (2,-2) -- (0,0); \node[hdot] at (-1.5,-1.5) {};
\end{tikzpicture}\end{array}
=
\begin{array}{c}\begin{tikzpicture}[xscale=0.25,yscale=0.3,thick,baseline]
 \draw (-1.5,-1.5) -- (0,0) -- (0,2); \draw (-0.5,-2) -- (0.75,-0.75); \draw (2,-2) -- (0,0); \node[hdot] at (-1.5,-1.5) {};
\end{tikzpicture}\end{array}
\overset{\eqref{eqn:hdot-pop}}{=}
\begin{array}{c}\begin{tikzpicture}[xscale=0.25,yscale=0.3,thick,baseline]
 \draw (-2,-2) -- (0,0) -- (0,2); \draw (2,-2) -- (0,0); \node[hdot] at (0,1) {};
\end{tikzpicture}\end{array}, \\
\begin{array}{c}\begin{tikzpicture}[xscale=0.25,yscale=0.3,thick,baseline]
 \draw (-2,2) -- (0,0) -- (0,-2); \draw (2,2) -- (0,0); \node[hdot] at (-1,1) {};
\end{tikzpicture}\end{array}
\overset{\eqref{eqn:hdot-pop}}{=}
\begin{array}{c}\begin{tikzpicture}[xscale=0.25,yscale=0.3,thick,baseline]
 \draw (-2,2) -- (0,0) -- (0,-2); \draw (-1,1) -- (-1.5,0.5); \draw (2,2) -- (0,0); \node[hdot] at (-1.5,0.5) {};
\end{tikzpicture}\end{array}
=
\begin{array}{c}\begin{tikzpicture}[xscale=0.25,yscale=0.3,thick,baseline]
 \draw (-2,2) -- (0,0) -- (0,-2); \draw (-1,1) -- (-1.5,0.5) -- (-1.5,-0.5); \draw (2,2) -- (0,0); \node[hdot] at (-1.5,-0.5) {};
\end{tikzpicture}\end{array}
=
\begin{array}{c}\begin{tikzpicture}[xscale=0.25,yscale=0.3,thick,baseline]
 \draw (-2,2) -- (0,0.5) -- (0,-0.5) -- (-1,-1.5); \draw (0,-0.5) -- (2,-2); \draw (2,2) -- (0,0.5); \node[hdot] at (-1,-1.5) {};
\end{tikzpicture}\end{array}
\overset{\eqref{eqn:hdot-pop}}{=}
\begin{array}{c}\begin{tikzpicture}[xscale=0.25,yscale=0.3,thick,baseline]
 \draw (-2,2) -- (0,0) -- (0,-2); \draw (2,2) -- (0,0); \node[hdot] at (0,-1) {};
\end{tikzpicture}\end{array},
\end{gather*}
and similarly for the vertical reflections.

We have thus shown that $\HPhi$ is a well-defined monoidal functor. It is clear from the construction that $\HPhi$ commutes with $\ll1\rr$. This completes the proof of Theorem~\ref{thm:hmkd}.


\end{document}